\def\line#1{\hbox to \hsize{#1\hfill}}
\newtheorem{prop}{Proposition}
\newtheorem{defi}{Definition}
\newtheorem{theo}{Theorem}
\newtheorem{coro}{Corollary}
\newcommand{\M}{{\cal M}}
\date{27 May 2013 }
\def\mail#1#2{{\tt #1}@{\tt #2}}
\title{On minimal Lagrangian surfaces in the product of Riemannian two manifolds}
\author{Nikos Georgiou\footnote{The author is supported by Fapesp (2010/08669-9)}}
\begin{document}

\maketitle

\centerline{\textbf {\large{Abstract}}}

\bigskip

{\it Let $(\Sigma_1,g_1)$ and $(\Sigma_2,g_2)$ be connected, complete and orientable Riemannian two manifolds. Consider the two canonical K\"ahler structures $(G^{\epsilon},J,\Omega^{\epsilon})$ on the product 4-manifold $\Sigma_1\times\Sigma_2$ given by $ G^{\epsilon}=g_1\oplus \epsilon g_2$, $\epsilon=\pm 1$ and $J$ is the canonical product complex structure. Thus for $\epsilon=1$ the K\"ahler metric $G^+$ is Riemannian while for $\epsilon=-1$, $G^-$ is of neutral signature. 

We show that the metric $G^{\epsilon}$ is locally conformally flat iff the Gauss curvatures $\kappa(g_1)$ and $\kappa(g_2)$ are both constants satisfying $\kappa(g_1)=-\epsilon\kappa(g_2)$. 

We also give conditions on the Gauss curvatures for which every $G^{\epsilon}$-minimal Lagrangian surface is the product $\gamma_1\times\gamma_2\subset\Sigma_1\times\Sigma_2$, where $\gamma_1$ and $\gamma_2$ are geodesics of $(\Sigma_1,g_1)$ and $(\Sigma_2,g_2)$, respectively. Finally, we explore the Hamiltonian stability of projected rank one Hamiltonian $G^{\epsilon}$-minimal surfaces.}

\bigskip

\centerline{\small \em 2000 MSC: 53D12,  49Q05
\em }


\section*{Introduction}

A submanifold of a symplectic manifold is said to be \emph{Lagrangian} if it is half the ambient dimension and the symplectic form vanishes on it. A Lagrangian submanifold of a pseudo-Riemannian manifold is said to be \emph{minimal} if it is a critical point of the volume functional associated with pseudo-Riemannian metric. A minimal submanifold is characterized by the vanishing of the trace of its second fundamental form, the \emph{mean curvature}. Recently, interest in minimal Lagrangian submanifolds in pseudo-Riemannian K\"ahler structures has grown amongst geometers \cite{An2} \cite{Ur2}, while minimal Lagrangian submanifolds in Calabi-Yau manifolds are of great interest in theoretical physics because of their close relationship to mirror symmety \cite{SYZ}. 

In addition, the space ${\mathbb L}({\mathbb M}^3)$ of oriented geodesics in a 3-dimensional space form $({\mathbb M}^3,g)$ admits a natural k\"ahler structure where the metric $G$ is of neutral signature, scalar flat and locally conformally flat \cite{AGK},\cite{An4}\cite{GG1} \cite{GK1}. 

The significance of these structures is that the identity component of the isometry group of $G$ is isomorphic with the identity component of the isometry group of $g$. Moreover, Salvai has proved that the neutral K\"ahler metrics on ${\mathbb L}({\mathbb E}^3)$ and ${\mathbb L}({\mathbb H}^3)$ are the unique metrics with this property \cite{salvai0},\cite{salvai1}. 

The neutral K\"ahler structure on ${\mathbb L}({\mathbb M}^3)$ plays an important role in surface theory in $({\mathbb M}^3,g)$. In particular, if $S$ is a smoothly immersed surface in $M$, the set of oriented geodesics normal to $S$ form a Lagrangian surface in ${\mathbb L}({\mathbb M}^3)$. A Lagrangian surface $\Sigma$ in ${\mathbb L}({\mathbb M}^3)$ is $G$-minimal if and only if $\Sigma$ is locally the set of normal oriented geodesics of an equidistant tube along a geodesic in $M$ \cite{An4} \cite{AGR} \cite{Ge}.

Oh in \cite{Oh1} has introduced a natural variational problem, apart from the classical variational problem of minimizing the volume functional in a homology class, consisting of minimizing the volume with respect to Hamiltonian compactly supported variations. An important property of these variations is that they preserve the Lagrangian constraint.  A Lagrangian submanifold in a K\"ahler or a pseudo-K\"ahler manifold is said to be \emph{Hamiltonian minimal submanifold} if it is a critical point of the volume functional with respect to Hamiltonian compactly supported variations. A Hamiltonian minimal submanifold can be characterized by its mean curvature vector being divergence-free. 

For example, in the space ${\mathbb L}({\mathbb E}^3)$ of oriented lines in the Euclidean 3-space, a Hamiltonian minimal surface is the set of oriented lines normal to a surface $S\subset {\mathbb E}^3$ that is a critical point of the functional 
\[
{\cal F}(S)=\displaystyle\int_S\sqrt{H^2-K}dA,
\] 
where $H, K$ denote the mean and the Gauss curvature of $S$, respectively \cite{AGR}.  

The neutral K\"ahler structures on the space of oriented great circles in the three sphere ${\mathbb S}^3$ and the space of  oriented space-like geodesics in the anti De Sitter 3-space $\mbox{Ad}{\mathbb S}^3$ can both be identified with the product structures, ${\mathbb L}({\mathbb S}^3)={\mathbb S}^2\times {\mathbb S}^2$ and ${\mathbb L}^+(\mbox{Ad}{\mathbb S}^3)={\mathbb H}^2\times {\mathbb H}^2$. 

More generally, one is led to consider the K\"ahler structures derived by the product structure of $\Sigma_1\times\Sigma_2$, where $(\Sigma_1,g_1)$ and $(\Sigma_2,g_2)$ are complete, connected, orientable Riemannian 2-manifolds. 

Let $\omega_1$ and $\omega_2$ be the symplectic two forms of $(\Sigma_1,g_1)$ and $(\Sigma_2,g_2)$ respectively, and let $j_1$ and $j_2$ be their complex structures as Riemann surfaces. 

For $\epsilon=1$ or $-1$, consider the product structures of the four-dimensional manifold $\Sigma_1\times\Sigma_2$ endowed with the product metrics $G^{\epsilon}=\pi_1^{\ast} g_1+ \epsilon \pi_1^{\ast} g_2$, the almost complex structure $J=j_1\oplus j_2$ and the symplectic two forms $\Omega^{\epsilon}=\pi_1^{\ast}\omega_1+ \epsilon \pi_2^{\ast}\omega_2$, where $\pi_i$ are the projections of $\Sigma_1\times\Sigma_2$ onto $\Sigma_i$, $i=1,2$. The quadruples $(\Sigma_1\times\Sigma_2, G^{\epsilon},J,\Omega^{\epsilon})$ are easily seen to be 4-dimensional K\"ahler structures.

In this paper we study $G^{\epsilon}$-minimal Lagrangian surfaces in the K\"ahler $4$-manifold $(\Sigma_1\times \Sigma_2,G^{\epsilon},J,\Omega^{\epsilon})$. 
In section \ref{s:construction} we prove:

\vspace{0.1in}

\noindent
{\bf Theorem 1.} \emph{The K\"ahler metric $G^{+}$ is Riemannian while the K\"ahler metric $G^{-}$ is neutral. Moreover, the  K\"ahler metric $G^{\epsilon}$ is conformally flat if and if the Gauss curvatures $\kappa(g_1)$ and $\kappa(g_2)$ are both constants with $\kappa(g_1)=-\epsilon \kappa(g_2)$. }

\vspace{0.1in}

In section \ref{s:rankoneranotwo}, we first define the \emph{projected rank} (see Definition \ref{d:defiprojectedrank}) of a surface in $\Sigma_1\times\Sigma_2$ and we prove that every Lagrangian surface is either of projected rank one or of projected rank two. 

For the projected rank one case, we classify all Hamiltonian $G^{\epsilon}$-minimal surfaces:

\vspace{0.1in}

\noindent
{\bf Theorem 2.} \emph{Every projected rank one Lagrangian surface can be locally parametrised by $\Phi:S\rightarrow \Sigma_1\times\Sigma_2:(s,t)\mapsto (\phi(s),\psi(t))$, where $\phi$ and $\psi$ are regular curves on $\Sigma$ and the induced metric $\Phi^{\ast}G^{\epsilon}$ is flat. $\Phi$ is Hamiltonian $G^{\epsilon}$-minimal if and only if $\phi$ and $\psi$ are Cornu spirals of parameters $\lambda_{\phi}$ and $\lambda_{\psi}$, respectively, such that
\[
\lambda_{\phi}=-\epsilon\lambda_{\psi}.
\]
$\Phi$ is $G^{\epsilon}$-minimal Lagrangian if and only if  both $\phi$ and $\psi$ are geodesics. Furthermore, every projected rank one $G^{\epsilon}$-minimal Lagrangian surface in $\Sigma_1\times\Sigma_2$ is totally geodesic.}

\vspace{0.1in}

In the same section, the following theorem gives the conditions for the non-existence of projected rank two $G^{\epsilon}$-minimal Lagrangian surfaces:

\vspace{0.1in}

\noindent
{\bf Theorem 3.} \emph{Let $(\Sigma_1,g_1)$ and $(\Sigma_2,g_2)$ be Riemannian two manifolds and let $(G^{\epsilon},J,\Omega^{\epsilon})$ be the canonical K\"ahler product structures on $\Sigma_1\times\Sigma_2$. Let $\kappa(g_1),\kappa(g_2)$ be the Gauss curvatures  of $g_1$ and $g_2$ respectively. Assume that either of the following hold:}

\noindent \emph{(i) The metrics $g_1$ and $g_2$ are both generically non-flat and $\epsilon \kappa(g_1)\kappa(g_2)<0$ away from flat points.}

\noindent \emph{(ii) Only one of the metrics $g_1$ and $g_2$ is flat while the other is non-flat generically. }

\emph{Then every $G^{\epsilon}$-minimal Lagrangian surface is of projected rank one.}

\vspace{0.1in}

Here a generic property is one that holds almost everywhere. Note that Theorem \ref{t:notflattt} is no longer true when $(\Sigma_1,g_1)$ and $(\Sigma_2,g_2)$ are both flat, since there exist projected rank two minimal Lagrangian immersions in the complex Euclidean space ${\mathbb C}^2$ endowed with the pseudo-Hermitian product structure \cite{AGR}.  

Minimality is the first order condition for a submanifold to be volume-extremizing in its homology class. Harvey and Lawson \cite{HL1} have proven that minimal Lagrangian submanifolds of a Calabi-Yau manifold is calibrated, which implies by Stokes theorem, that are volume-extremizing. The second order condition for a minimal submanifold to be volume-extremizing was first derived by Simons \cite{Si}. 

Minimal submanifolds that are local extremizers of the volume are called \emph{stable minimal submanifolds}. The stability of a minimal submanifold is determined by the monotonicity of the second variation of the volume functional. If the second variation of the volume functional of a Hamiltonian minimal submanifold is monotone for any Hamiltonian compactly supported variation, it is said to be \emph{Hamiltonian stable}. 
In \cite{Oh1} and \cite{Oh2}, the second variation formula of a Hamiltonian minimal submanifold has been derived in the case of a K\"ahler manifold, while for the pseudo-K\"ahler case it has been derived in \cite{AnGer}.

The following Theorem in the section \ref{s:hamiltstabilitysection}, investigates the Hamiltonian stability of projected rank one Hamiltonian $G^{\epsilon}$-minimal surfaces in $\Sigma_1\times\Sigma_2$:

\vspace{0.1in}

\noindent
{\bf Theorem 4.} \emph{Let $\Phi=(\phi,\psi)$ be of projected rank one Hamiltonian $G^{\epsilon}$-minimal immersion in $(\Sigma_1\times\Sigma_2,G^{\epsilon})$ such that $\kappa(g_1)\leq -2k_{\phi}^2$ and $\kappa(g_2)\leq -2k_{\psi}^2$ along the curves $\phi$ and $\psi$ respectively. Then $\Phi$ is a  local minimizer of the volume in its Hamiltonian isotopy class.}

\vspace{0.2in}

\noindent {\bf Acknowledgements.} The author would like to thank H. Anciaux and B. Guilfoyle for their helpful and valuable suggestions and comments.

\vspace{0.2in}

\section{The Product K\"ahler structure}\label{s:construction}
Consider the Riemannian 2-manifolds $(\Sigma_k, g_k)$ for $k=1,2$ and denote by $j_k$ the rotation by an angle $+\pi/2$ in $T\Sigma_k$. Set $\omega_k(\cdot, \cdot)=g_k(j_k\cdot, \cdot)$ so that the quadruples $(\Sigma_k,g_k,j_k,\omega_k)$ are 2-dimensional K\"ahler manifolds. 

Using the following identification,
\[
X\in T(\Sigma_1\times\Sigma_2)\simeq (X_1,X_2)\in T\Sigma_1\oplus T\Sigma_2,\qquad \mbox{where}\qquad X_k\in T\Sigma_k.
\]
we obtain the natural splitting $T(\Sigma_1\times\Sigma_2)=T\Sigma_1\oplus T\Sigma_2$. Let $(x,y)\in \Sigma_1\times\Sigma_2$ and $X=(X_1,X_2)$ and $Y=(Y_1,Y_2)$ be two tangent vectors in $T_{(x,y)}(\Sigma_1\times\Sigma_2)$. Define the metric $G^{\epsilon}_{(x,y)}$ by:
\[
G^{\epsilon}_{(x,y)}(X,Y)=g_1(X_1,Y_1)(x)+\epsilon g_2(X_2,Y_2)(y),
\]
where $\epsilon=1$ or $-1$. The Levi-Civita connection $\nabla$ with respect to the metric $G^{\epsilon}$ is 
\[
\nabla_X Y=(D^1_{X_1} Y_1,D^2_{X_2} Y_2),
\]
where $X=(X_1,X_2),Y=(Y_1,Y_2)$ are vector fields in $\Sigma_1\times\Sigma_2$ and $D^1,D^2$ denote the Levi-Civita connections with respect to $g_1$ and $g_2$, respectively. 

Consider the endomorphism $J\in End(T\Sigma_1\oplus T\Sigma_2)$ defined by $J=j_1\oplus j_2$, i.e.,
$J(X)=(j_1X_1,j_2X_2)$. Clearly, $J$ is an almost complex structure on $\Sigma_1\times\Sigma_2$. 
\begin{prop}
The almost complex structure $J$ is integrable.
\end{prop}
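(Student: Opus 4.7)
The plan is to prove integrability by producing local $J$-holomorphic charts, which gives integrability both by definition and via the Newlander--Nirenberg theorem. The key observation is that every almost complex structure on a real two-dimensional manifold is automatically integrable, since $(\Sigma_k,j_k)$ is a Riemann surface; so both $j_1$ and $j_2$ admit local complex coordinates adapted to them.

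Concretely, around any point $p=(p_1,p_2)\in\Sigma_1\times\Sigma_2$ I would pick complex charts $z_k:U_k\to\mathbb{C}$ satisfying $dz_k\circ j_k=i\,dz_k$. Using the splitting $T(\Sigma_1\times\Sigma_2)=T\Sigma_1\oplus T\Sigma_2$, the product map $(z_1,z_2):U_1\times U_2\to\mathbb{C}^2$ then satisfies, for every tangent vector $X=(X_1,X_2)$,
\[
d(z_1,z_2)(JX)=(dz_1(j_1X_1),dz_2(j_2X_2))=(i\,dz_1(X_1),i\,dz_2(X_2))=i\,d(z_1,z_2)(X),
\]
so the chart intertwines $J$ with multiplication by $i$ on $\mathbb{C}^2$. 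These charts cover $\Sigma_1\times\Sigma_2$ and give the required holomorphic atlas.

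Alternatively, one can directly verify the vanishing of the Nijenhuis tensor $N_J(X,Y)=[JX,JY]-J[JX,Y]-J[X,JY]-[X,Y]$. Since $N_J$ is tensorial, it suffices to evaluate on pure lifts from each factor: for $X=(X_1,0)$ and $Y=(Y_1,0)$ the computation reduces block-diagonally to $(N_{j_1}(X_1,Y_1),0)=0$, symmetrically for lifts from $\Sigma_2$, while the mixed case $(X_1,0),(0,Y_2)$ makes all brackets vanish because the fields involve derivatives in independent coordinate directions, so each of the four terms of $N_J$ is zero. There is essentially no obstacle here; the content of the proposition is simply that integrability respects products and is automatic in real dimension two.
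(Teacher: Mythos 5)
Your proposal is correct. Your primary route, however, differs from the paper's: the paper proves integrability by computing the Nijenhuis tensor of the product structure and observing that it splits as $N_J(X,Y)=\bigl(N_{j_1}(X_1,Y_1),N_{j_2}(X_2,Y_2)\bigr)$, each component vanishing because $j_1,j_2$ are the integrable complex structures of the Riemann surfaces $\Sigma_1,\Sigma_2$; your chart-based argument instead exhibits explicit $J$-holomorphic coordinates $(z_1,z_2)$ built from isothermal (holomorphic) coordinates on each factor, which yields integrability directly from the definition, with no appeal to Newlander--Nirenberg at all --- the only point you should state explicitly is that the transition maps between such product charts are holomorphic, being products of holomorphic transitions on the factors, so the charts really do form a holomorphic atlas. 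Your alternative argument via the Nijenhuis tensor is essentially the paper's proof, and in fact is slightly more complete: you spell out the tensoriality reduction to pure and mixed lifts and note that the mixed brackets vanish because lifts from distinct factors commute, a step the paper leaves implicit when it asserts the block splitting of $N_J$. In short, the chart approach buys a self-contained, computation-free proof exploiting that products of complex manifolds are complex, while the paper's (and your alternative) Nijenhuis computation fits more naturally with the connection-based formalism the paper has just set up and is the framework it reuses later.
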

\begin{proof}
The Nijenhuis tensor $N_J$ is
\[
N_J(X,Y)=[JX,JY]^{\nabla}-J[JX,Y]^{\nabla}-J[JX,Y]^{\nabla}-[X,Y]^{\nabla},
\]
where $X=(X_1,X_2),Y=(Y_1,Y_2)$ are vector fields in $\Sigma_1\times\Sigma_2$ and $[\cdot,\cdot]^{\nabla}$ denotes the Lie bracket with respect to the Levi-Civita connection $\nabla$. Then 
\[
[X,Y]^{\nabla}=([X_1,Y_1]^{D^1},[X_2,Y_2]^{D^2}), 
\]
where $[\cdot,\cdot]^{D^i}$ are the Lie brackets with respect to the Levi-Civita connections $D^i$. Thus,
\begin{eqnarray}
N_J(X,Y)&=&[JX,JY]^{\nabla}-J[JX,Y]^{\nabla}-J[JX,Y]^{\nabla}-[X,Y]^{\nabla}\nonumber \\
&=&(N_{j_1}(X_1,Y_1),N_{j_2}(X_2,Y_2)),\nonumber
\end{eqnarray}
and the Proposition follows.
\end{proof}

Let $\pi_i:\Sigma_1\times \Sigma_2\rightarrow \Sigma_i$ be the $i$-th projection, and define the following two-forms 
\[
\Omega^{\epsilon}=\pi_1^{\ast}\omega_1+\epsilon\pi_2^{\ast}\omega_2.
\]
\begin{theo} The quadruples $(\Sigma_1\times\Sigma_2, G^{\epsilon},J,\Omega^{\epsilon})$ are 4-dimensional K\"ahler structures. 

The  K\"ahler metric $G^{\epsilon}$ is conformally flat if and if the Gauss curvatures $\kappa(g_1)$ and $\kappa(g_2)$ are both constants with $\kappa(g_1)=-\epsilon \kappa(g_2)$. 
\end{theo}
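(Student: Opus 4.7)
The K\"ahler assertion is routine verification. The form $\Omega^{\epsilon}$ is a sum of pullbacks of closed two-forms, hence closed; non-degeneracy follows from the block-diagonal structure with respect to $T\Sigma_{1} \oplus T\Sigma_{2}$. The compatibility $\Omega^{\epsilon}(X,Y) = G^{\epsilon}(JX,Y)$ and the Hermitian identity $G^{\epsilon}(JX,JY) = G^{\epsilon}(X,Y)$ reduce componentwise to the corresponding K\"ahler relations on each factor. Because the Levi-Civita connection splits as $\nabla = D^{1} \oplus D^{2}$ and $D^{k} j_{k} = 0$, we obtain $\nabla J = 0$; combined with the integrability of $J$ from the preceding proposition, this gives the K\"ahler property. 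The signature claim is immediate from the definition of $G^{\epsilon}$.

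For the conformal-flatness statement, in dimension four (in either Riemannian or neutral signature) local conformal flatness is equivalent to the vanishing of the Weyl tensor $W$, so I would compute $W$ at an arbitrary point $(x,y)$ in an adapted frame. Choose a $g_{1}$-orthonormal frame $\{e_{1},e_{2}\}$ on $T_{x}\Sigma_{1}$ and a $g_{2}$-orthonormal frame $\{e_{3},e_{4}\}$ on $T_{y}\Sigma_{2}$; jointly these form a pseudo-orthonormal frame for $G^{\epsilon}$ with $G^{\epsilon}(e_{a},e_{b}) = \mathrm{diag}(1,1,\epsilon,\epsilon)_{ab}$. The splitting of $\nabla$ yields $R(X,Y)Z = (R^{1}(X_{1},Y_{1})Z_{1}, R^{2}(X_{2},Y_{2})Z_{2})$, so the only non-zero Riemann coefficients in this frame are $R_{1212} = \kappa(g_{1})$ and $R_{3434} = \epsilon\,\kappa(g_{2})$, the extra factor of $\epsilon$ coming from lowering the last index against $G^{\epsilon}$. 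Consequently the Ricci eigenvalues are $\kappa(g_{1}), \kappa(g_{1}), \kappa(g_{2}), \kappa(g_{2})$ and the scalar curvature is $2(\kappa(g_{1}) + \epsilon\,\kappa(g_{2}))$.

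Substituting these into the standard four-dimensional Weyl formula
\[
W_{abcd} = R_{abcd} - \tfrac{1}{2}\bigl(G^{\epsilon}_{ac} R_{bd} - G^{\epsilon}_{ad} R_{bc} + G^{\epsilon}_{bd} R_{ac} - G^{\epsilon}_{bc} R_{ad}\bigr) + \tfrac{\mathrm{Scal}}{6}\bigl(G^{\epsilon}_{ac} G^{\epsilon}_{bd} - G^{\epsilon}_{ad} G^{\epsilon}_{bc}\bigr)
\]
and evaluating on the three essentially distinct types of two-plane (the two \emph{pure} planes $\{e_{1},e_{2}\}$ and $\{e_{3},e_{4}\}$, and a \emph{mixed} plane such as $\{e_{1},e_{3}\}$) collapses every non-vanishing component of $W$ to a scalar multiple of $\kappa(g_{1}) + \epsilon\,\kappa(g_{2})$. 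Hence $W = 0$ at $(x,y)$ precisely when $\kappa(g_{1})(x) + \epsilon\,\kappa(g_{2})(y) = 0$.

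The main subtlety, more conceptual than computational, is the passage from this pointwise relation to constancy of both curvatures: since $\kappa(g_{1})$ depends only on $x$ and $\kappa(g_{2})$ only on $y$, forcing the identity for \emph{every} pair $(x,y) \in \Sigma_{1} \times \Sigma_{2}$ makes both sides a common constant, yielding $\kappa(g_{1}) = -\epsilon\,\kappa(g_{2}) \in \mathbb{R}$. The converse is immediate, so this gives the stated characterisation. The chief computational hazard I would watch for is consistent bookkeeping of the extra $\epsilon$ factors whenever indices in the third or fourth slot are lowered with $G^{\epsilon}$.
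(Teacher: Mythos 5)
Your argument is correct, but it follows a genuinely different route from the paper's for the conformal-flatness statement. You compute the full Weyl tensor directly in a product-adapted frame, where the only curvature components are $R_{1212}=\kappa(g_1)$ and $R_{3434}=\epsilon\,\kappa(g_2)$, and observe that every surviving component of $W$ is a multiple of $\kappa(g_1)+\epsilon\,\kappa(g_2)$ (indeed $W_{1212}=W_{3434}=\tfrac13(\kappa(g_1)+\epsilon\kappa(g_2))$ and the mixed components equal $-\tfrac{\epsilon}{6}(\kappa(g_1)+\epsilon\kappa(g_2))$, all others vanishing); the separation-of-variables step then forces both curvatures to be constant. This treats $\epsilon=\pm1$ uniformly, never uses the complex structure, and gives both implications at once from the pointwise equivalence $W=0\Leftrightarrow\kappa(g_1)(x)+\epsilon\kappa(g_2)(y)=0$. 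The paper instead exploits the K\"ahler property: for the forward direction it invokes the cited fact that a conformally flat (pseudo-)K\"ahler $4$-manifold is scalar flat (Derdzi\'nski in the Riemannian case, the analogous result of Anciaux--Romon in the neutral case) and reads off constancy from the scalar curvature $R^{\epsilon}=2(\kappa(g_1)+\epsilon\kappa(g_2))$; for the converse it uses the decomposition of the curvature operator into self-dual and anti-self-dual parts, where $W^{\pm}$ is a fixed multiple of the scalar curvature by the K\"ahler condition and the remaining part is checked to vanish by direct computation, treating $G^{+}$ and $G^{-}$ separately. Your version is more elementary and self-contained (no reliance on the cited scalar-flatness results, and valid for any product of surfaces); the paper's version involves less index bookkeeping and displays the role of the K\"ahler structure. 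Two small points to tighten: your phrase ``Ricci eigenvalues $\kappa(g_1),\kappa(g_1),\kappa(g_2),\kappa(g_2)$'' should be read as the values $Ric(e_a,e_a)$ of the bilinear form (the eigenvalues of the Ricci endomorphism in the third and fourth directions are $\epsilon\kappa(g_2)$), which is what makes your trace $2(\kappa(g_1)+\epsilon\kappa(g_2))$ correct; and your claim that only the three plane-type components need checking tacitly uses that $G^{\epsilon}$ and $Ric$ are diagonal in the adapted frame, so that all components of $W$ with an odd distribution of indices between the factors vanish --- true, but worth stating explicitly.
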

\begin{proof}
We have already seen that the almost complex structure $J$ is integrable. It is obvious that $\Omega^{\epsilon}$ is closed, i.e., $d\Omega^{\epsilon}=0$ and is therefore a symplectic form on $\Sigma_1\times\Sigma_2$. 

Moreover, $J$ is compatible with $\Omega^{\epsilon}$ since for $X=(X_1,X_2)$ and $Y=(Y_1,Y_2)$, we have
\begin{eqnarray}
\Omega^{\epsilon}_{(x,y)}(JX,JY)&=&\Omega^{\epsilon}_{(x,y)}((j_1X_1,j_1X_2),(j_2Y_1,j_2Y_2))\nonumber \\
&=&\omega_1(j_1X_1,j_1Y_1)(x)+\epsilon\omega_2(j_2X_2,j_2Y_2)(y)\nonumber\\
&=&\omega_1(X_1,Y_1)(x)+\epsilon\omega_2(X_2,Y_2)(y)\nonumber\\
&=&\Omega^{\epsilon}_{(x,y)}(X,Y).\nonumber
\end{eqnarray}

We proceed with the proof by considering the cases of $G^+$ and $G^-$.

\vspace{0.1in}

\noindent {\bf The case of $G^+$:} Assume that $(e_1,e_2)$ and $(v_1,v_2)$ are orthonormal frames on $\Sigma_1$ and $\Sigma_2$ respectively, both oriented in such a way $j_1e_{1}=e_2$ and $j_2v_{1}=v_{2}$. Consider the  orthonormal frame $(E_1,E_2,E_3, E_{4})$ of $(\Sigma_1\times\Sigma_2,G^+)$ defined by 
\[
E_1=\frac{1}{\sqrt{3}}(e_1,v_1+v_2),\qquad E_2=JE_1=\frac{1}{\sqrt{3}}(e_2,v_2-v_1)
\]
\[
 E_3=\frac{1}{\sqrt{3}}(e_1-e_2,-v_1),\qquad E_4=JE_3=\frac{1}{\sqrt{3}}(e_1+e_2,-v_2).
\]
If $Ric^{+}$ denotes the Ricci curvature tensor with respect to the metric $G^+$, we have
\[
Ric^+(E_1,E_1)_{(x,y)}=Ric^+(E_2,E_2)_{(x,y)}=\frac{\kappa(g_1)(x)+2\kappa(g_2)(y)}{3},
\]
\[
Ric^+(E_3,E_3)_{(x,y)}=Ric^+(E_4,E_4)_{(x,y)}=\frac{2\kappa(g_1)(x)+\kappa(g_2)(y)}{3},
\]
and therefore the scalar curvatute $R^+$ is:
\begin{equation}\label{e:scpos}
R^+=\sum_{i=1}^4 Ric^{+}(E_i,E_i)=2(\kappa(g_1)(x)+\kappa(g_2)(y)).
\end{equation}
If $G^{\epsilon}$ is conformally flat, it is scalar flat \cite{De} and thus, from (\ref{e:scpos}), the Gauss curvatures $\kappa(g_1), \kappa(g_2)$ are constants with $\kappa(g_1)=-\kappa(g_2)$.

Conversely suppose that
\begin{equation}\label{e:equalconstcurv}
\kappa(g_1)=-\kappa(g_2)=c,
\end{equation}
where $c$ is a real constant. Consider the corresponding coframe ${\cal B}_+=(e_1,e_2,e_3,e_4)$ of the orthonormal frame $(E_1,E_2,E_3, E_{4})$. The Hodge star operator $\ast:\Lambda^2(\Sigma_1\times\Sigma_2)\rightarrow \Lambda^2(\Sigma_1\times\Sigma_2)$ defined by
\[
a\wedge \ast b=G^+(a,b)\mbox{Vol},
\]
splits the bundle of 2-forms $\Lambda^2(\Sigma_1\times\Sigma_2)$ into:
\[
\Lambda^2(\Sigma_1\times\Sigma_2)=\Lambda^2_+(\Sigma_1\times\Sigma_2)\oplus \Lambda^2_-(\Sigma_1\times\Sigma_2),
\]
where $\Lambda^2_+(\Sigma_1\times\Sigma_2),\Lambda^2_-(\Sigma_1\times\Sigma_2)$ are the self-dual and the anti-self-dual 2-form bundles, respectively and $\mbox{Vol}=e_1\wedge e_2\wedge e_3\wedge e_4$ is the volume element.

With respect to this splitting the Riemann curvature operator ${\cal R}:\Lambda^2(\Sigma_1\times\Sigma_2)\rightarrow \Lambda^2(\Sigma_1\times\Sigma_2)$ defined by
\[
{\cal R}(e_i\wedge e_j)e_k\wedge e_l=G(R(E_i,E_j)E_k,E_l),
\]
is decomposed by:
\[
{\cal R}=\begin{pmatrix} W^{+} +\frac{R^+}{12}I & Z \\  Z^{\ast} & W^{-} +\frac{R^+}{12}I 
\end{pmatrix},
\]
where $W^{\pm}:\Lambda^2_{\pm}(\Sigma_1\times\Sigma_2)\rightarrow \Lambda^2_{\pm}(\Sigma_1\times\Sigma_2)$ are the self-dual and the anti-self-dual part of the Weyl tensor $W$ and $Z$ is the traceless Ricci tensor. Note that $W=W^{+}\oplus W^{-}$. An orthonormal basis for $\Lambda^2_{\pm}(\Sigma_1\times\Sigma_2)$ is
\begin{eqnarray}
e^{\pm}_1&=&\frac{1}{\sqrt{2}}(e_1\wedge e_2\pm e_3\wedge e_4),\nonumber \\
e^{\pm}_2&=&\frac{1}{\sqrt{2}}(e_1\wedge e_3\mp e_2\wedge e_4),\nonumber \\
e^{\pm}_3&=&\frac{1}{\sqrt{2}}(e_1\wedge e_4\pm e_2\wedge e_3).\nonumber 
\end{eqnarray}
The fact that $G^+$ is scalar flat, the self-dual part $W^{+}$ vanishes since,
\[
W^{+}=R^+\begin{pmatrix} 1/3 & & \\ & -1/6 & \\ & & -1/6\end{pmatrix}.
\]
Substituting (\ref{e:equalconstcurv}) into (\ref{e:scpos}) the scalar curvature $R^+$ vanishes and thus $W^-(e^-_i,e^-_j)={\cal R}(e^-_i)e^-_j$. A brief computation shows that ${\cal R}(e^-_i)e^-_j=0$ for all $i,j$. For example,
\begin{eqnarray}
{\cal R}(e^-_1)e^-_2&=&\frac{1}{2}{\cal R}(e_1\wedge e_2)e_1\wedge e_2+\frac{1}{2}{\cal R}(e_3\wedge e_4)e_3\wedge e_4\nonumber \\
&=&\frac{1}{2}\Big(G^+(R(E_1,E_2)E_1,E_2)+G^+(R(E_3,E_4)E_3,E_4)\Big)\nonumber \\
&=&0.\nonumber 
\end{eqnarray}
Thus, the anti-self-dual part $W^-$ also vanishes. Therefore the Weyl tensor $W=0$, or $G^+$ is locally conformally flat.

\vspace{0.1in}

\noindent {\bf The case of $G^-$:} We now prove that the neutral K\"ahler metric $G^-$ is conformally flat if and only if the Gauss curvatures $\kappa(g_1),\kappa(g_2)$ are both constants with $\kappa(g_1)=\kappa(g_2)$. For this metric, consider the orthonormal frame $(E_1,E_2,E_3,E_4)$ defined by: 
\[
E_{1}=(e_{1},v_{1}+v_{2}),\qquad E_{2}=JE_{1}=(e_{2},v_{2}-v_{1}),
\]
\[
 E_{3}=(e_{1}-e_{2},v_{1}),\qquad E_{4}=JE_{3}=(e_{1}+e_{2},v_{2}).
\]
In particular,
\[
-|E_{1}|^2=-|E_{2}|^2=|E_{3}|^2=|E_{4}|^2=1,\qquad G(E_i,E_j)=0,\;\;\forall i\neq i.
\] 
A brief computation gives
\[
Ric^-(E_{1},E_{1})=Ric^-(E_2,E_2)=\kappa(g_1)(x)+2\kappa(g_2)(y),
\]
\[
Ric^-(E_{3},E_{3})=Ric^-(E_4,E_4)=2\kappa(g_1)(x)+\kappa(g_2)(y),
\]
where $R^-$ is the Ricci tensor of the metric $G^-$. Then, if $R^-$ denotes the scalar curvature of $G^-$, we have
\begin{eqnarray}
R^-&=&\sum_{k=1}^2\Big(-Ric^-(E_{k},E_{k})+Ric^-(E_{2+k},E_{2+k})\Big)\nonumber \\
&=&2(\kappa(g_1)(x)-\kappa(g_2)(y)).\label{e:scnegat}
\end{eqnarray}
If the neutral K\"ahler metric $G^-$ is conformally flat, it is also scalar flat \cite{AR} and hence, from (\ref{e:scnegat}), the Gauss curvatures $\kappa(g_1)$ and $\kappa(g_2)$ are constants with $\kappa(g_1)=\kappa(g_2)$.

Following the same argument as before, assume the converse. That is, $\kappa(g_1)=\kappa(g_2)=c$, where $c$ is a real constant. Consider the corresponding coframe ${\cal B}_2=(e_1,e_2,e_3,e_4)$ and the Hodge star operator $\ast:\Lambda^2(\Sigma_1\times\Sigma_2)\rightarrow \Lambda^2(\Sigma_1\times\Sigma_2)$. 

The Hodge star operator splits the Riemann curvature operator ${\cal R}:\Lambda^2(\Sigma_1\times\Sigma_2)\rightarrow \Lambda^2(\Sigma_1\times\Sigma_2)$ in the same way as in the Riemannian case. The Weyl $(0,4)$-tensor $W$ is given by:
\[
W_{ijkl}=R^G_{ijkl}-\frac{1}{2}(-G_{jk}Ric^G_{il}+G_{jl}Ric^G_{ik}-G_{il}Ric^G_{jk}+G_{ik}Ric^G_{jl}),
\]
where $R^G_{ijkl}=G(R^G(E_i,E_j)E_k,E_l)$. An orthonormal basis for $\Lambda^2_{\pm}(\Sigma_1\times\Sigma_2)$, in the neutral case, is
\begin{eqnarray}
e^{\pm}_1&=&\frac{1}{\sqrt{2}}(e_1\wedge e_2\pm e_3\wedge e_4),\nonumber \\
e^{\pm}_2&=&\frac{1}{\sqrt{2}}(e_1\wedge e_3\pm e_2\wedge e_4),\nonumber \\
e^{\pm}_3&=&\frac{1}{\sqrt{2}}(e_1\wedge e_4\mp e_2\wedge e_3).\nonumber 
\end{eqnarray}
The fact that $G^-$ is scalar flat, following \cite{AR}, the anti-self-dual part $W^{-}$ vanishes since,
\[
W^{-}=R^-\begin{pmatrix} 1/3 & & \\ & 1/6 & \\ & & 1/6\end{pmatrix}.
\]
The self-dual part is
\[
W^{+}=\begin{pmatrix} W_{1212}+W_{3434}+2W_{1234} & 2(W_{1213}+W_{1334}) & 2(W_{1214}+W_{1434}) \\ & 2(W_{1313}+W_{1324}) & 2(W_{1314}-W_{1323})\\ & &  2(W_{1414}-W_{1423})\end{pmatrix},
\]
and a brief computation shows that $W^{+}$ vanishes. Therefore, the Weyl tensor $W$ vanishes, or $G$ is locally conformally flat.
\end{proof}

\vspace{0.1in}

\begin{coro}
Let $(\Sigma,g)$ be a Riemannian two manifold. The neutral K\"ahler metric $G^-$ of the four dimensional K\"ahler manifold $\Sigma\times\Sigma$ is conformally flat if and only if the metric $g$ is of constant Gaussian curvature.
\end{coro}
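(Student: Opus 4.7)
The plan is to derive the corollary as an immediate specialization of Theorem 1 (the preceding main theorem), applied to the diagonal case $\Sigma_1=\Sigma_2=\Sigma$ with $g_1=g_2=g$ and sign choice $\epsilon=-1$.

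First I would invoke Theorem 1, which asserts that $G^{\epsilon}$ is (locally) conformally flat if and only if $\kappa(g_1)$ and $\kappa(g_2)$ are both constants satisfying $\kappa(g_1)=-\epsilon\,\kappa(g_2)$. Setting $(\Sigma_1,g_1)=(\Sigma_2,g_2)=(\Sigma,g)$ and $\epsilon=-1$, the condition $\kappa(g_1)=-\epsilon\kappa(g_2)$ becomes $\kappa(g)=\kappa(g)$, which is automatic. Thus the only remaining content of Theorem 1 in this case is the requirement that $\kappa(g)$ be constant, which is precisely the definition of constant Gaussian curvature.

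For the converse direction, if $g$ has constant Gaussian curvature $c$, then trivially $\kappa(g_1)=\kappa(g_2)=c$ are both constants with $\kappa(g_1)=\kappa(g_2)$, so the hypothesis of the $G^-$ half of Theorem 1 is verified, whence $G^-$ on $\Sigma\times\Sigma$ is locally conformally flat.

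There is no real obstacle here; the corollary is a one-line consequence of Theorem 1. The only point worth flagging is that the neutral sign $\epsilon=-1$ is exactly what makes the diagonal identification $g_1=g_2$ compatible with the sign condition $\kappa(g_1)=-\epsilon\kappa(g_2)$: for $\epsilon=+1$ the analogous statement on $\Sigma\times\Sigma$ would force $\kappa(g)=-\kappa(g)$, i.e.\ $g$ flat, so the neutral signature is essential for the corollary to recover the full class of constant-curvature surfaces.
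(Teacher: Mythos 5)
Your proposal is correct and is exactly the argument the paper intends: the corollary is stated without proof precisely because it is the immediate specialization of the theorem to $\Sigma_1=\Sigma_2=\Sigma$, $g_1=g_2=g$, $\epsilon=-1$, where the sign condition $\kappa(g_1)=-\epsilon\kappa(g_2)$ becomes vacuous and only the constancy of $\kappa(g)$ remains. Your closing remark about why $\epsilon=+1$ would instead force flatness is a correct and worthwhile observation, consistent with the theorem.
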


\vspace{0.2in}

\section{Surface theory in the 4-manifold $\Sigma_1\times\Sigma_2$ }\label{s:rankoneranotwo}

Let $\Phi: S\rightarrow\Sigma_1\times\Sigma_2$ be a smooth immersion of a surface $S$ in $\Sigma_1\times\Sigma_2$, where $(\Sigma_1,g_1)$ and $(\Sigma_2,g_2)$ are both Riemannian two manifolds and let $\pi_i$ be the projections of $\Sigma_1\times\Sigma_2$ onto $\Sigma_i$, $i=1,2$. We denote by $\phi$ and $\psi$ the mappings $\pi_1\circ\Phi$ and $\pi_2\circ\Phi$, respectively and we write $\Phi=(\phi,\psi)$. The rank of a mapping at a point is the rank of its derivative at that point.
\begin{defi}\label{d:defiprojectedrank}
The immersion $\Phi=(\phi,\psi): S\rightarrow\Sigma_1\times\Sigma_2$ is said to be of \emph{projected rank zero} at a point $p\in S$ if either $\mbox{rank}(\phi(p))=0$ or $\mbox{rank}(\psi(p))=0$. $\Phi$ is of \emph{projected rank one} at $p$ if either $\mbox{rank}(\phi(p))=1$ or $\mbox{rank}(\psi(p))=1$. Finally, $\Phi$ is of \emph{projected rank two} at $p$ if $\mbox{rank}(\phi(p))=\mbox{rank}(\psi(p))=2$. 
\end{defi}
Note that, since it is an immersion, $\Phi$ must be of projected rank zero, one or two.

\vspace{0.1in}

\subsection{Projected rank zero case}

Let $\Phi=(\phi,\psi)$ be of projected rank zero immersion in $\Sigma_1\times\Sigma_2$. Assuming, without loss of generality, that $\mbox{rank}(\phi)=0$, the map $\phi$ is locally a constant function and the map $\psi$ is a local diffeomorphism.  We now give the following Proposition:
\begin{prop}
There are no Lagrangian immersions in $\Sigma_1\times\Sigma_2$ of projected rank zero.
\end{prop}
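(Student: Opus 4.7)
The plan is to derive a direct contradiction with the Lagrangian condition $\Phi^{\ast}\Omega^{\epsilon}=0$ by computing the pullback of the symplectic form under the projected rank zero assumption.

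Without loss of generality, suppose $\Phi=(\phi,\psi)$ is of projected rank zero at $p\in S$ with $\mathrm{rank}(\phi(p))=0$. Since $\Phi$ is an immersion, $\mathrm{rank}(\psi(p))$ must equal $2$, so $\psi$ is a local diffeomorphism near $p$. By shrinking $S$ if necessary, the preceding paragraph in the paper already notes that $\phi$ is locally constant while $\psi$ is a local diffeomorphism; I would work on this neighborhood.

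Next I would compute $\Phi^{\ast}\Omega^{\epsilon}$. Since $\Omega^{\epsilon}=\pi_1^{\ast}\omega_1+\epsilon\pi_2^{\ast}\omega_2$ and $\pi_1\circ\Phi=\phi$, $\pi_2\circ\Phi=\psi$, naturality of pullbacks gives
\[
\Phi^{\ast}\Omega^{\epsilon}=\phi^{\ast}\omega_1+\epsilon\,\psi^{\ast}\omega_2.
\]
Because $\phi$ is locally constant, $d\phi\equiv 0$, hence $\phi^{\ast}\omega_1=0$. Thus $\Phi^{\ast}\Omega^{\epsilon}=\epsilon\,\psi^{\ast}\omega_2$.

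Now the key observation: $\psi$ is a local diffeomorphism from a surface to the surface $\Sigma_2$, and $\omega_2$ is a symplectic (in particular, nowhere vanishing) 2-form on $\Sigma_2$. Therefore $\psi^{\ast}\omega_2$ is a nonvanishing 2-form on $S$, and so is $\epsilon\,\psi^{\ast}\omega_2$. This contradicts the Lagrangian condition $\Phi^{\ast}\Omega^{\epsilon}=0$, proving the proposition. The argument is short and presents no real obstacle; the only care needed is to use that $\Phi$ being an immersion forces the non-projected factor to have full rank, which is what makes $\psi^{\ast}\omega_2$ nondegenerate.
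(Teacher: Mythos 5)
Your proof is correct and follows essentially the same route as the paper: both arguments note that rank$(\phi)=0$ forces $\psi$ to be a local diffeomorphism, so $\Phi^{\ast}\Omega^{\epsilon}=\epsilon\,\psi^{\ast}\omega_2$ is a nowhere-vanishing $2$-form, contradicting the Lagrangian condition. Your write-up is, if anything, slightly cleaner, since the paper states the nonvanishing for arbitrary vector fields $X,Y$ rather than for a frame.
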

\begin{proof}
If $\Phi=(\phi,\psi): S\rightarrow\Sigma_1\times\Sigma_2$ were an immersed surface with $rank(\phi)=0$, then $\psi:S\rightarrow \Sigma_2$ is a 
local diffeomorphism and thus for any vector fields $X,Y$ on $S$ 
\begin{eqnarray}
\Phi^{\ast}\Omega^{\epsilon}(X,Y)&=&\Omega^{\epsilon}(d\Phi(X),d\Phi(Y))\nonumber \\
&=&\Omega^{\epsilon}((0,d\psi(X)),(0,d\psi(Y)))\nonumber \\
&=&\epsilon\;\omega(d\psi(X),d\psi(Y))\nonumber \\
&\neq&0,\nonumber
\end{eqnarray}
where the last line follows from the non-degeneracy of $\omega$ and the fact that $d\psi$ is a bundle isomorphism.
\end{proof}

\vspace{0.1in}

\subsection{Projected rank one Lagrangian surfaces}

We begin by giving a definition of the Cornu spirals in a Riemannian two manifold.

\begin{defi}
Let $(\Sigma,g)$ be a Riemannian two manifold. A regular curve $\gamma$ of $\Sigma$ is called a \emph{Cornu spiral of parameter $\lambda$} if its curvature $\kappa_{\gamma}$ is a linear function of its arclength parameter such that $\kappa_{\gamma}(s)=\lambda s+\mu$, where $s$ is the arclength and $\lambda,\mu$ are real constants.  
\end{defi}

A Cornu spiral $\gamma$ in ${\mathbb R}^2$ of parameter $\lambda$ can be parametrised, up to congruences, by 
\[
\gamma(s)=\left(\int_0^s\cos(\lambda t^2/2)dt,\int_0^s\sin(\lambda t^2/2)dt\right),
\]
and they are bounded but have infinite length \cite{AnCa}.

Let $\Phi=(\phi,\psi):S\rightarrow\Sigma_1\times\Sigma_2$ be of projected rank one immersion in $\Sigma_1\times\Sigma_2$. Then either $\phi$ or $\psi$ is of rank one. The following theorem gives all rank one Hamiltonian $G^{\epsilon}$-minimal surfaces:

\begin{theo}\label{t:rankonethe}
Every projected rank one Lagrangian surface can be locally parametrised by $\Phi:S\rightarrow \Sigma_1\times\Sigma_2:(s,t)\mapsto (\phi(s),\psi(t))$, where $\phi$ and $\psi$ are regular curves on $\Sigma$ and the induced metric $\Phi^{\ast}G^{\epsilon}$ is flat. In addition, $\Phi$ is Hamiltonian $G^{\epsilon}$-minimal if and only if $\phi$ and $\psi$ are Cornu spirals of parameters $\lambda_{\phi}$ and $\lambda_{\psi}$ respectively such that
\[
\lambda_{\phi}=-\epsilon\lambda_{\psi}.
\]
Moreover, $\Phi$ is $G^{\epsilon}$-minimal Lagrangian if and only if  both $\phi$ and $\psi$ are geodesics, and every projected rank one $G^{\epsilon}$-minimal Lagrangian surface in $\Sigma_1\times\Sigma_2$ is totally geodesic.
\end{theo}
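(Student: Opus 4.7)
The plan is to first derive the product form of $\Phi$ from the rank-one Lagrangian hypothesis, then compute the second fundamental form using the splitting of the Levi-Civita connection, and finally read off both the minimal and the Hamiltonian minimal conditions from the mean curvature vector and its dual $1$-form.

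For the local parametrisation, the key point is that the Lagrangian condition $\phi^{\ast}\omega_1+\epsilon\,\psi^{\ast}\omega_2=0$, combined with projected rank one, forces \emph{both} $\phi$ and $\psi$ to have rank exactly one. Indeed, assuming without loss of generality $\mathrm{rank}(d\phi)=1$, the pullback $\phi^{\ast}\omega_1$ vanishes automatically (a $2$-form pulled back by a rank-$1$ map is zero), so $\psi^{\ast}\omega_2=0$ as well; non-degeneracy of $\omega_2$ then rules out $\mathrm{rank}(d\psi)=2$, while the rank-zero case was excluded in the previous subsection. The constant rank theorem factors $\phi=\phi_0\circ u$ and $\psi=\psi_0\circ v$ through curves $\phi_0,\psi_0$, and the immersion hypothesis forces $(u,v)$ to be a local diffeomorphism on $S$; reparametrising $\phi_0,\psi_0$ by their $g_1$- and $g_2$-arclengths gives the stated form $\Phi(s,t)=(\phi(s),\psi(t))$ with induced metric $ds^2+\epsilon\,dt^2$, which is manifestly flat.

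Next, using the product Levi-Civita connection $\nabla_X Y=(D^1_{X_1}Y_1,D^2_{X_2}Y_2)$, the Frenet formulas $D^1_{\phi'}\phi'=\kappa_\phi\,j_1\phi'$ and $D^2_{\psi'}\psi'=\kappa_\psi\,j_2\psi'$, and the Lagrangian normal frame $N_1:=J\Phi_s=(j_1\phi',0)$, $N_2:=J\Phi_t=(0,j_2\psi')$, one reads off
\[
II(\Phi_s,\Phi_s)=\kappa_\phi\,N_1,\qquad II(\Phi_s,\Phi_t)=0,\qquad II(\Phi_t,\Phi_t)=\kappa_\psi\,N_2.
\]
Tracing with $g^{ss}=1,\;g^{tt}=\epsilon$ gives the mean curvature $H=\kappa_\phi\,N_1+\epsilon\,\kappa_\psi\,N_2$, so $H\equiv 0$ iff $\kappa_\phi\equiv\kappa_\psi\equiv 0$, i.e.\ iff both $\phi$ and $\psi$ are geodesics; in that case every component of $II$ vanishes, yielding the totally geodesic conclusion.

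For Hamiltonian minimality, $J^2=-\mathrm{Id}$ gives $JH=-\kappa_\phi\,\Phi_s-\epsilon\,\kappa_\psi\,\Phi_t$, whence the mean curvature $1$-form is $\alpha_H:=G^\epsilon(JH,\cdot)=-\kappa_\phi\,ds-\kappa_\psi\,dt$. Its codifferential in the flat metric $ds^2+\epsilon\,dt^2$ is $\delta\alpha_H=\kappa_\phi'(s)+\epsilon\,\kappa_\psi'(t)$, so $\delta\alpha_H=0$ separates variables and forces each derivative to be a constant; writing $\lambda_\phi:=\kappa_\phi'$ and $\lambda_\psi:=\kappa_\psi'$, this exactly says $\phi$ and $\psi$ are Cornu spirals with $\lambda_\phi=-\epsilon\,\lambda_\psi$. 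The delicate points are applying the constant rank theorem uniformly on the dense open set where ranks are constant, and getting the sign of the codifferential right in the neutral signature case $\epsilon=-1$; every other step is a direct consequence of the product structure of $(\Sigma_1\times\Sigma_2,G^\epsilon,J)$.
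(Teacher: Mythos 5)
Your proposal is correct and follows essentially the same route as the paper: force both projections to have rank one via the Lagrangian condition and non-degeneracy of $\omega_2$, obtain the product parametrisation with flat induced metric $ds^2+\epsilon\,dt^2$, compute the second fundamental form from the product connection and Fr\'enet equations to get $2\vec H^{\epsilon}=k_\phi J\Phi_s+\epsilon k_\psi J\Phi_t$, and read off minimality and the Hamiltonian condition $k_\phi'(s)+\epsilon k_\psi'(t)=0$, whose separation of variables gives the Cornu spirals with $\lambda_\phi=-\epsilon\lambda_\psi$. The only differences are cosmetic (explicit use of the constant rank theorem and of the codifferential of the Maslov form where the paper computes $\mathrm{div}(J\vec H^{\epsilon})$ directly), so no gap to report.
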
 
\begin{proof}
Let $\Phi=(\phi,\psi):S\rightarrow\Sigma_1\times\Sigma_2$ be of  projected rank one Lagrangian immersion. Assume, without loss of generality, that $\phi$ is of rank one. We now prove that $\psi$ is of rank one. 

Since $\Phi$ is an immersion of a surface, the map $\psi$ cannot be of rank zero. Suppose that $\psi$ is of rank two, i.e. a local diffeomorphism. 
Thus, $\Phi$ is locally parametrised by $\Phi:U\subset S\rightarrow\Sigma_1\times\Sigma_2:(s,t)\mapsto (\phi(s),\psi(s,t))$. Hence,
\[
\Phi_s=(\phi'(s),\psi_s)\qquad \Phi_t=(0,\psi_t).
\]
Since $\Phi$ is a Lagrangian immersion we have that $\omega_2(\psi_s,\psi_t)=0$. 

The fact that $\psi$ is a local diffeomorphism implies that for any non-zera vector field $X$ in $\Sigma_2$ can be written as $X=a\psi_s+b\psi_t$ and therefore we have that $\omega_2(\psi_s,X)=0$. The nondegeneracy of $\omega_2$ implies that $\psi$ is cannot be a local diffeomorphism  since $\psi_s=0$. Thus 
$\psi$ is also a rank one immersion.

We now have that $S$ is locally parametrised by $\Phi:U\subset S\rightarrow\Sigma_1\times\Sigma_2:(s,t)\mapsto (\phi (s),\psi (t))$, where $\phi$ and $\psi$ are regular curves in $\Sigma_1$ and $\Sigma_2$, respectively. If $s,t$ are the corresponding arc-length parameters of $\phi$ and $\psi$, the Fr\'enet equtions give
\[
D^1_{\phi'}\phi'=k_{\phi}j\phi'\qquad D^2_{\psi'}\psi'=k_{\psi}j\psi',
\]
where $k_{\phi}$ and $k_{\psi}$ denote the curvatures of $\phi$ and $\psi$, respectively. Moreover, $\Phi_s=(\phi',0)$ and $\Phi_t=(0,\psi')$ and thus,
\[
\nabla_{\Phi_s}\Phi_s=(D^1_{\phi'}\phi',0)=(k_{\phi}j\phi',0),\quad \nabla_{\Phi_t}\Phi_t=(0,D^2_{\psi'}\psi')=(0,k_{\psi}j\psi'),\quad \nabla_{\Phi_t}\Phi_s=(0,0).
\] 
The first fundamental form $G^{\epsilon}_{ij}=G^{\epsilon}(\partial_i\Phi,\partial_j\Phi)$ is given by
\[
G_{ss}=\epsilon G_{tt}=1,\qquad G_{st}=0,
\]
which proves that the immersion $\Phi$ is flat. 

The second fundamental form $h^{\epsilon}$ of $\Phi$, is completely determined by the following tri-symmetric tensor 
\[
h^{\epsilon}(X,Y,Z):=G^{\epsilon}(h^{\epsilon}(X,Y),JZ)=\Omega^{\epsilon}(X,\nabla_Y Z).
\]
We then have,
\[
h^{\epsilon}_{sst}=\Omega^{\epsilon}(\Phi_s,\nabla_{\Phi_s}\Phi_t)=0,\quad h^{\epsilon}_{stt}=\Omega^{\epsilon}(\Phi_s,\nabla_{\Phi_t}\Phi_t)=0.
\]
Moreover,
\[
h_{sss}^{\epsilon}=\Omega^{\epsilon}(\Phi_s,\nabla_{\Phi_s}\Phi_s)=\Omega^{\epsilon}((\phi',0),(k_{\phi}j\phi',0))=G^{\epsilon}((j\phi',0),(k_{\phi}j\phi',0))=k_{\phi},
\]
and similarly, $h_{ttt}^{\epsilon}=\epsilon k_{\psi}$. Denote the mean curvature of $\Phi$ with respect to the metric $G^{\epsilon}$ by 
$\vec{H}^{\epsilon}$. Then
\[
G^{\epsilon}(2\vec{H}^{\epsilon},J\Phi_s)=\frac{h^{\epsilon}_{sss}G^{\epsilon}_{tt}+h^{\epsilon}_{stt}G^{\epsilon}_{ss}-
2h^{\epsilon}_{sst}G^{\epsilon}_{st}}{G^{\epsilon}_{ss}G^{\epsilon}_{tt}-(G^{\epsilon}_{st})^2}=k_{\phi},
\]
and
\[
G^{\epsilon}(2\vec{H}^{\epsilon},J\Phi_t)=\frac{h^{\epsilon}_{sst}G^{\epsilon}_{tt}+
h^{\epsilon}_{ttt}G^{\epsilon}_{ss}-2h^{\epsilon}_{stt}G^{\epsilon}_{st}}{G^{\epsilon}_{ss}G^{\epsilon}_{tt}-(G^{\epsilon}_{st})^2}=k_{\psi}.
\] 
Hence
\[
2\vec{H}^{\epsilon}=k_{\phi}J\Phi_s+\epsilon k_{\psi}J\Phi_t.
\]
It is not hard to see that the Lagrangian immersion $\Phi$ is $G^{\epsilon}$-minimal iff the curves $\phi$ and $\psi$ are geodesics. Moreover, if $\Phi$ is $G^{\epsilon}$-minimal Lagrangian it is totally geodesic since the second fundamental form vanishes identically. 

Note also that,
\[
\mbox{div}^{\epsilon}(\Phi_s)=-G^{\epsilon}(\nabla_{\Phi_s}\Phi_s,\Phi_s)=-G^{\epsilon}((k_{\phi}j\phi',0),(\phi',0))=-g(k_{\phi}j\phi',\phi')= 0.
\]
In a similar way, we derive that $\mbox{div}^{\epsilon}(\Phi_t)=0$.

Thus,
\begin{eqnarray}
-\mbox{div}^{\epsilon}(2J\vec{H}^{\epsilon})&=&G^{\epsilon}(\nabla k_{\phi},\Phi_s)+k_{\phi}\mbox{div}^{\epsilon}(\Phi_s)+\epsilon G^{\epsilon}(\nabla k_{\psi},\Phi_t)+\epsilon k_{\psi}\mbox{div}^{\epsilon}(\Phi_t)\nonumber\\
&=& \frac{D}{ds}k_{\phi}(s)+\epsilon \frac{D}{dt}k_{\psi}(t),\nonumber
\end{eqnarray}
and the theorem follows. 
\end{proof}

\vspace{0.1in}

\subsection{Projected rank two Lagrangian surfaces}
For the projected rank two case, we have the following Theorem:

\begin{theo}\label{t:notflattt} Let $(\Sigma_1,g_1)$ and $(\Sigma_2,g_2)$ be Riemannian two manifolds and let $(G^{\epsilon},J,\Omega^{\epsilon})$ be the canonical K\"ahler product structures on $\Sigma_1\times\Sigma_2$ constructed in section \ref{s:construction}. Let $\kappa(g_1),\kappa(g_2)$ be the Gauss curvatures  of $g_1$ and $g_2$ respectively. Assume that one of the following holds:

\noindent \emph{(i)} The metrics $g_1$ and $g_2$ are both generically non-flat and $\epsilon \kappa(g_1)\kappa(g_2)<0$ away from flat points.

\noindent \emph{(ii)} Only one of the metrics $g_1$ and $g_2$ is flat while the other is non-flat generically. 

Then every $G^{\epsilon}$-minimal Lagrangian surface is of  projected rank one.
\end{theo}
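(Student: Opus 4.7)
The plan is to argue by contradiction: suppose $\Phi=(\phi,\psi):S\to\Sigma_1\times\Sigma_2$ is a $G^{\epsilon}$-minimal Lagrangian immersion of projected rank two on some non-empty open set $U\subset S$. On $U$, both $\phi$ and $\psi$ are local diffeomorphisms, so after shrinking $U$ I may use $\phi$ as a chart and write $\Phi(x,y)=\bigl((x,y),\psi(x,y)\bigr)$ in suitable local coordinates of $\Sigma_1$. The strategy is to extract from the Lagrangian and minimality conditions a pointwise identity relating $\kappa(g_1)\circ\phi$ to $\kappa(g_2)\circ\psi$, and then to contradict each of the hypotheses (i) and (ii).

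The main tool is the Maslov-type formula for Lagrangian immersions in a (pseudo-)K\"ahler manifold: the mean curvature one-form $\alpha_H:=\Phi^{\ast}(\iota_{\vec H^{\epsilon}}\Omega^{\epsilon})$ satisfies $d\alpha_H=\Phi^{\ast}\rho^{\epsilon}$, where $\rho^{\epsilon}$ is the Ricci two-form of $G^{\epsilon}$. Since the product metric is a direct sum and the Ricci $(0,2)$-tensor of any surface metric is $\kappa g$ (unchanged when $g_2$ is rescaled by $\epsilon$), a short computation yields
\[
\rho^{\epsilon}=\kappa(g_1)\,\pi_1^{\ast}\omega_1+\kappa(g_2)\,\pi_2^{\ast}\omega_2.
\]
Minimality gives $\vec H^{\epsilon}\equiv 0$, hence $\Phi^{\ast}\rho^{\epsilon}=0$, so that on $S$
\[
\bigl(\kappa(g_1)\!\circ\!\phi\bigr)\,\phi^{\ast}\omega_1+\bigl(\kappa(g_2)\!\circ\!\psi\bigr)\,\psi^{\ast}\omega_2=0.
\]
The Lagrangian condition $\Phi^{\ast}\Omega^{\epsilon}=0$ reads $\phi^{\ast}\omega_1=-\epsilon\,\psi^{\ast}\omega_2$. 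Substituting and using that $\phi^{\ast}\omega_1$ is nowhere zero on $U$ (because $\phi$ is a local diffeomorphism there), I obtain the key identity
\[
\kappa(g_1)(\phi(p))=\epsilon\,\kappa(g_2)(\psi(p))\qquad\text{for every }p\in U.
\]

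In particular $\epsilon\,\kappa(g_1)(\phi(p))\,\kappa(g_2)(\psi(p))=\kappa(g_2)(\psi(p))^2\ge 0$ throughout $U$. Under hypothesis (i) the open set $\phi(U)\subset\Sigma_1$ meets the open dense non-flat locus of $g_1$, so at a corresponding point $p\in U$ both Gauss curvatures are non-zero, yet $\epsilon\,\kappa(g_1)\kappa(g_2)\ge 0$ at $(\phi(p),\psi(p))$, contradicting the assumption that $\epsilon\,\kappa(g_1)\kappa(g_2)<0$ away from flat points. Under hypothesis (ii), say $\kappa(g_1)\equiv 0$, the identity forces $\kappa(g_2)\circ\psi\equiv 0$ on $U$; but $\psi(U)$ is a non-empty open subset of $\Sigma_2$ which must meet $\{\kappa(g_2)\neq 0\}$, a contradiction (the case in which $g_2$ is the flat factor is entirely symmetric). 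Hence no projected rank-two open set exists, and $\Phi$ must be of projected rank one everywhere. The only delicate step is the Maslov-type identity $d\alpha_H=\Phi^{\ast}\rho^{\epsilon}$ in the neutral case $\epsilon=-1$, which I expect to be the main technical obstacle; it can be handled either by invoking the pseudo-K\"ahler version of Dazord's formula or, failing that, by a direct Codazzi computation in the coordinates above on the open region $U$ where the induced metric $\Phi^{\ast}G^{-}$ is non-degenerate, so that $\vec H^{\epsilon}$ and its dual form $\alpha_H$ are unambiguously defined.
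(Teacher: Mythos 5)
Your argument is correct, and its skeleton is the same as the paper's: assume a projected rank two locus, invoke the Maslov-type identity $da_{H^{\epsilon}}=\Phi^{\ast}\rho^{\epsilon}$ (the paper's equation (\ref{e:ricciform}), taken from \cite{CTU} and re-proved for the neutral case in the Appendix), use minimality to get $\Phi^{\ast}\rho^{\epsilon}=0$, and combine with the Lagrangian relation $\phi^{\ast}\omega_1=-\epsilon\,\psi^{\ast}\omega_2$ to reach the pointwise identity $\kappa(g_1)\circ\phi=\epsilon\,\kappa(g_2)\circ\psi$, which is then incompatible with hypotheses (i) and (ii). Where you genuinely diverge is in how $\Phi^{\ast}\rho^{\epsilon}$ is evaluated: you observe that the Ricci $(0,2)$-tensor of the product is $\kappa(g_1)\,\pi_1^{\ast}g_1+\kappa(g_2)\,\pi_2^{\ast}g_2$ independently of $\epsilon$ (rescaling $g_2$ by $-1$ changes neither the connection nor the Ricci tensor), so that, up to the sign convention, $\rho^{\epsilon}=\kappa(g_1)\,\pi_1^{\ast}\omega_1+\kappa(g_2)\,\pi_2^{\ast}\omega_2$; pulling back and dividing by the nowhere-vanishing $\phi^{\ast}\omega_1$ gives the key identity in one line. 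The paper instead expands $d\phi(e_i)$, $d\psi(e_i)$ in orthonormal frames with coefficients $\lambda_i,\mu_i,\bar\lambda_i,\bar\mu_i$, arrives at (\ref{e:consd}), and must then run through the algebraic relations (\ref{e:codn})--(\ref{e:codn2}) to see that the two coefficients reduce to $1$ and $\epsilon$; your intrinsic computation absorbs exactly that algebra (the surface Ricci tensor being $\kappa g$ is what makes the coefficients collapse), so it is shorter and less error-prone, at the cost of having to state the Ricci form of the product explicitly. Two smaller points are to your credit: working on an open set $U$ of rank-two points and using that $\phi(U)$, $\psi(U)$ are open, while the non-flat loci are dense (being of full measure), makes the role of the word ``generically'' precise, which the paper leaves implicit; and you correctly isolate the neutral-case Dazord formula as the one delicate ingredient --- it is precisely what the paper's Appendix supplies, so your fallback of a direct Codazzi computation is not needed.
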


\begin{proof}
Assume that the $G^{\epsilon}$-minimal Lagrangian immersion $\Phi=(\phi,\psi):S\rightarrow \Sigma_1\times\Sigma_2$ is of  projected rank two. Then by definition the  mappings $\phi:S\rightarrow \Sigma_1$ and $\psi:S\rightarrow \Sigma_2$ are both local diffeomorphisms. The Lagrangian assumption $\Phi^{\ast}\Omega^{\epsilon}=0$ yields 
\begin{equation}\label{e:lagpositive}
\phi^{\ast}\omega_1=-\epsilon\psi^{\ast}\omega_2.
\end{equation}
Take an orthonormal frame $(e_1,e_2)$ of $\Phi^{\ast}G^{\epsilon}$ such that,
\[
G^{\epsilon}(d\Phi(e_1),d\Phi(e_1))=\epsilon G^{\epsilon}(d\Phi(e_2),d\Phi(e_2))=1,\qquad G^{\epsilon}(d\Phi(e_1),d\Phi(e_2))=0.
\]
The Lagrangian condition implies that the frame $(d\Phi(e_1),d\Phi(e_2),Jd\Phi(e_1),Jd\Phi(e_2))$ is orthonormal. Let $(s_1,s_2)$ and $(v_1,v_2)$ be oriented orthonormal frames of $(\Sigma_1,g_1)$ and $(\Sigma_2,g_2)$ respectively such that $j_1s_1=s_2$ and $j_2v_1=v_2$. Then there exist smooth functions $\lambda_1,\lambda_2,\mu_1,\mu_2$ on $\Sigma_1$ and $\bar\lambda_1,\bar\lambda_2,\bar\mu_1,\bar\mu_2$ on $\Sigma_2$ such that
\[
d\phi(e_1)=\lambda_1 s_1+\lambda_2 s_2\qquad d\phi(e_2)=\mu_1 s_1+\mu_2 s_2,
\]
\[
d\psi(e_1)=\bar\lambda_1 v_1+\bar\lambda_2 v_2\qquad d\psi(e_2)=\bar\mu_1 v_1+\bar\mu_2 v_2.
\]
Thus,
\[
\phi^{\ast}\omega_1(e_1,e_2)=\lambda_1\mu_2-\lambda_2\mu_1,\qquad \psi^{\ast}\omega_2(e_1,e_2)=\bar\lambda_1\bar\mu_2-\bar\lambda_2\bar\mu_1,
\]
and using the Lagrangian condition (\ref{e:lagpositive}) we have
\[
(\lambda_1\mu_2-\lambda_2\mu_1)(\phi(p))=-\epsilon(\bar\lambda_1\bar\mu_2-\bar\lambda_2\bar\mu_1)(\psi(p)),\quad\forall\;\; p\in S.
\]
Moreover, the assumption that $\Phi$ is of projected rank two, implies that $\lambda_1\mu_2-\lambda_2\mu_1\neq 0$ for every $p\in S$.

If $H^{\epsilon}$ is the mean curvature vector of the immersion $\Phi$, consider the one form $a_{H^{\epsilon}}$ defined by $a_{H^{\epsilon}}=G^{\epsilon}(JH^{\epsilon},\cdot)$. Since $\Phi$ is Lagrangian, it is known from \cite{CTU} that
\begin{equation}\label{e:ricciform}
da_{H^{\epsilon}}=\Phi^{\ast}\rho^{\epsilon},
\end{equation} 
where $\rho^{\epsilon}$ is the Ricci form of $G^{\epsilon}$. The fact that $\Phi$ is a $G^{\epsilon}$-minimal Lagrangian immersion implies that $\Phi^{\ast}\rho^{\epsilon}$ vanishes and therefore,
\begin{eqnarray}
0&=&\rho^{\epsilon}(d\Phi(e_1),d\Phi(e_2))\nonumber \\
&=& Ric^{\epsilon}(d\Phi(e_1),Jd\Phi(e_2))\nonumber \\
&=& \epsilon G^{\epsilon}(R(d\Phi e_1,d\Phi e_2)Jd\Phi e_2,d\Phi e_2)+G^{\epsilon}(R(d\Phi e_1 ,d\Phi e_2)Jd\Phi e_1,d\Phi e_1)\nonumber\\
&=& \epsilon g_1(R_1(d\phi e_1,d\phi e_2)j_1d\phi e_2,d\phi e_2)+ g_2(R_2(d\psi e_1 ,d\psi e_2)j_2d\psi e_2,d\psi e_2)\nonumber \\
&&\qquad\quad  +g_1(R_1(d\phi e_1,d\phi e_2)j_1d\phi e_1,d\phi e_1)+\epsilon g_2(R_2(d\psi e_1 ,d\psi e_2)Jd\psi e_1,d\psi e_1).\nonumber \\
&=& \epsilon \Big((\lambda_1^2+\lambda_2^2+\epsilon(\mu_1^2+\mu_2^2)\Big)(\mu_1\lambda_2-\mu_2\lambda_1)\kappa(g_1)\nonumber \\
&&\qquad\quad  +\Big(\bar\lambda_1^2+\bar\lambda_2^2+\epsilon(\bar\mu_1^2+\bar\mu_2^2)\Big)(\bar\mu_1\bar\lambda_2-\bar\mu_2\bar\lambda_1)\kappa(g_2)\nonumber \\
&=& \epsilon (\mu_1\lambda_2-\mu_2\lambda_1)\Big[\Big(\lambda_1^2+\lambda_2^2+\epsilon(\mu_1^2+\mu_2^2)\Big)\kappa(g_1)-
\Big(\bar\lambda_1^2+\bar\lambda_2^2+\epsilon(\bar\mu_1^2+\bar\mu_2^2)\Big)\kappa(g_2)\Big)\Big]\nonumber 
\end{eqnarray} 
which finally gives,
\begin{equation}\label{e:consd}
\Big(\lambda_1^2+\lambda_2^2+\epsilon(\mu_1^2+\mu_2^2)\Big)\kappa(g_1)=
\Big(\bar\lambda_1^2+\bar\lambda_2^2+\epsilon(\bar\mu_1^2+\bar\mu_2^2)\Big)\kappa(g_2).
\end{equation}
The condition $G^{\epsilon}(d\Phi(e_1),d\Phi(e_2))=0$ yields
\begin{equation}\label{e:codn}
\lambda_1\mu_1+\lambda_2\mu_2=-\epsilon(\bar\lambda_1\bar\mu_1+\bar\lambda_2\bar\mu_2).
\end{equation}
Now using (\ref{e:lagpositive}) and (\ref{e:codn}) we have
\begin{equation}\label{e:codn1}
(\lambda_1^2+\lambda_2^2)(\mu_1^2+\mu_2^2)=(\bar\lambda_1^2+\bar\lambda_2^2)(\bar\mu_1^2+\bar\mu_2^2).
\end{equation}
From $G^{\epsilon}(d\Phi(e_1),d\Phi(e_1))=\epsilon G^{\epsilon}(d\Phi(e_2),d\Phi(e_2))=1$ we obtain
\begin{equation}\label{e:codn2}
\lambda_1^2+\lambda_2^2+\epsilon(\bar\lambda_1^2+\bar\lambda_2^2)=\epsilon(\mu_1^2+\mu_2^2)+\bar\mu_1^2+\bar\mu_2^2=1.
\end{equation}
Set $a:=\lambda_1^2+\lambda_2^2$, $b:=\mu_1^2+\mu_2^2$, $\bar a:=\bar\lambda_1^2+\bar\lambda_2^2$, $\bar b:=\bar\mu_1^2+\bar\mu_2^2$. The relations (\ref{e:codn}), (\ref{e:codn1}) and (\ref{e:codn2}) give
\[
ab=\bar a\bar b\qquad a+\epsilon\bar a=\epsilon b+\bar b=1.
\]
Thus $a=-\epsilon\bar a+1$ and $b=\epsilon-\epsilon\bar b$, and from $ab=\bar a\bar b$ we have that $\bar a+\epsilon\bar b=\epsilon$. Moreover, $\bar a=\epsilon-\epsilon a$ and $\bar b=1-\epsilon b$, and again from $ab=\bar a\bar b$ we have $a+\epsilon b=1$. Hence, relation (\ref{e:consd}) becomes
\[
\kappa(g_1)(\phi(p))=\epsilon \kappa(g_2)(\psi(p)),\qquad \mbox{for}\;\mbox{every}\;\; p\in S,
\]
which implies that the metrics $g_1$ and $g_2$ can satisfy neither condition (i) nor condition (ii) of the statement. 
\end{proof}

\vspace{0.1in}

The following Corollaries follow:

\begin{coro}
Every $G^+$-minimal Lagrangian surface immersed in ${\mathbb S}^2\times {\mathbb H}^2$ is, up to isometry, the cylinder ${\mathbb S}^1\times {\mathbb R}$. Moreover, every $G^{\epsilon}$-minimal Lagrangian surface immersed in ${\mathbb R}^2\times {\mathbb H}^2$ \emph{(${\mathbb R}^2\times {\mathbb S}^2$)} is of projected rank one and therefore it is $\gamma_1\times\gamma_2$, where $\gamma_1$ is a straight line in ${\mathbb R}^2$  and $\gamma_2$ is a geodesic in ${\mathbb H}^2$  \emph{($\gamma_2$ is a geodesic in ${\mathbb S}^2$)}, respectively.
\end{coro}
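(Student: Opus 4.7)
The plan is to apply Theorem \ref{t:notflattt} to reduce each case to the projected rank one situation, then invoke Theorem \ref{t:rankonethe} to obtain the product-of-geodesics structure, and finally identify the factors with the standard model curves in each of the three ambient spaces.

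First I verify the curvature hypotheses of Theorem \ref{t:notflattt}. For $\mathbb{S}^2 \times \mathbb{H}^2$ with $\epsilon = +1$, the Gauss curvatures are $\kappa(g_1) \equiv 1$ and $\kappa(g_2) \equiv -1$, both nowhere zero, and $\epsilon\kappa(g_1)\kappa(g_2) \equiv -1 < 0$, so hypothesis (i) is satisfied. For $\mathbb{R}^2 \times \mathbb{H}^2$ and for $\mathbb{R}^2 \times \mathbb{S}^2$, exactly one factor is flat while the other has constant non-zero Gauss curvature, so hypothesis (ii) of Theorem \ref{t:notflattt} applies regardless of the sign of $\epsilon$. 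In all three cases, every $G^\epsilon$-minimal Lagrangian surface must therefore be of projected rank one.

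Next I apply Theorem \ref{t:rankonethe}, which produces a local parametrization $\Phi(s,t) = (\phi(s), \psi(t))$ with $\phi$ and $\psi$ geodesics of their respective factors, and moreover asserts that $\Phi$ is totally geodesic. Identification of the factors is then classical: geodesics of $\mathbb{S}^2$ are great circles, each isometric to $\mathbb{S}^1$; geodesics of $\mathbb{H}^2$ are complete hyperbolic lines, each isometric to $\mathbb{R}$; and geodesics of $\mathbb{R}^2$ are straight lines. Thus for $\mathbb{S}^2 \times \mathbb{H}^2$ the image is isometric, as a flat product, to $\mathbb{S}^1 \times \mathbb{R}$, while for $\mathbb{R}^2 \times \mathbb{H}^2$ (resp.\ $\mathbb{R}^2 \times \mathbb{S}^2$) the surface is $\gamma_1 \times \gamma_2$ with $\gamma_1$ a straight line in $\mathbb{R}^2$ and $\gamma_2$ a geodesic in the non-flat factor.

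I do not anticipate a serious obstacle, since the corollary is essentially a bookkeeping application of the two preceding theorems to the specific model spaces. The one point worth flagging is the passage from the local product description of Theorem \ref{t:rankonethe} to a global statement, but this is immediate from the totally geodesic conclusion together with completeness of the factors: a connected totally geodesic surface is determined by one of its tangent planes, and the product of the maximal geodesic extensions of $\phi$ and $\psi$ is automatically contained in $\Sigma_1 \times \Sigma_2$.
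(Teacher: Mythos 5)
Your proposal is correct and follows exactly the route the paper intends: the paper states this corollary without separate proof as an immediate consequence of Theorem \ref{t:notflattt} (whose hypotheses (i) and (ii) you verify correctly in the three model cases) combined with Theorem \ref{t:rankonethe}, which yields the product-of-geodesics description. Your extra remark on passing from the local product parametrization to the global cylinder via the totally geodesic property and completeness is a reasonable (and slightly more careful) filling-in of a step the paper leaves implicit.
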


\begin{coro}
Let $(\Sigma,g)$ be a Riemannian two manifold such that the metric $g$ is non-flat. Then every $G^{-}$-minimal Lagrangian surface immersed in $\Sigma\times\Sigma$ is of projected rank one and is therefore the product of two geodesics of $(\Sigma,g)$.
\end{coro}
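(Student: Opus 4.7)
The plan is to deduce the corollary directly from Theorem \ref{t:notflattt} (case (i)) combined with Theorem \ref{t:rankonethe}. Specialize the setting of Theorem \ref{t:notflattt} by taking $\Sigma_1=\Sigma_2=\Sigma$, $g_1=g_2=g$, and $\epsilon=-1$. The key observation is that with this choice the sign hypothesis becomes automatic wherever $\kappa(g)\neq 0$: indeed,
\[
\epsilon\,\kappa(g_1)\kappa(g_2)=-\kappa(g)^2<0
\]
at every non-flat point. Since $g$ is assumed non-flat, both factor metrics are generically non-flat, so hypothesis (i) of Theorem \ref{t:notflattt} is satisfied.

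Applying Theorem \ref{t:notflattt} then yields that every $G^{-}$-minimal Lagrangian immersion $\Phi:S\to\Sigma\times\Sigma$ is of projected rank one. Feeding this into Theorem \ref{t:rankonethe}, one obtains the local product parametrisation $\Phi(s,t)=(\phi(s),\psi(t))$ for regular curves $\phi,\psi$ in $\Sigma$, and the $G^{-}$-minimal Lagrangian condition forces both $\phi$ and $\psi$ to be geodesics of $(\Sigma,g)$. This gives exactly the claimed description as the product of two geodesics.

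The only subtle point is the interpretation of ``non-flat.'' I would read the assumption as ``$\kappa(g)$ does not vanish identically'' (equivalently, generically non-flat in the sense clarified after Theorem 3), in which case the sign condition $-\kappa(g)^2<0$ holds on an open dense set and Theorem \ref{t:notflattt}(i) applies verbatim. If only pointwise non-flatness at some point is meant, then on the nonempty open set where $\kappa(g)\neq 0$ the immersion is locally of projected rank one, and a connectedness/continuity argument (the projected rank is lower semicontinuous) would be invoked to conclude that the immersion has projected rank one globally, after which Theorem \ref{t:rankonethe} finishes the proof as before. This verification of the hypothesis is the only nontrivial step; the rest is a direct invocation of the two main theorems.
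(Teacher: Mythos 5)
Your overall route is exactly the one the paper intends: the corollary appears there with no separate proof, as an immediate specialization of Theorem \ref{t:notflattt}(i) with $\Sigma_1=\Sigma_2=\Sigma$, $g_1=g_2=g$, $\epsilon=-1$, followed by Theorem \ref{t:rankonethe} to identify a projected rank one $G^{-}$-minimal Lagrangian surface with a product of geodesics. That part of your proposal matches the paper.

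There is, however, a genuine gap in your verification of hypothesis (i), namely in the claim that the sign condition is ``automatic''. In Theorem \ref{t:notflattt} the quantity $\kappa(g_1)\kappa(g_2)$ is a function on $\Sigma_1\times\Sigma_2$: what the proof actually uses is $\epsilon\,\kappa(g_1)(x)\kappa(g_2)(y)<0$ at the pair $(x,y)=(\phi(p),\psi(p))$, and these two points are in general \emph{different}, since the contradiction is with the identity $\kappa(g_1)(\phi(p))=\epsilon\,\kappa(g_2)(\psi(p))$. Your computation $\epsilon\,\kappa(g_1)\kappa(g_2)=-\kappa(g)^2<0$ only checks the diagonal $x=y$. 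With $g_1=g_2=g$ and $\epsilon=-1$, hypothesis (i) really says $\kappa(g)(x)\,\kappa(g)(y)>0$ for (generic) pairs of points, i.e.\ that $\kappa(g)$ has constant sign where it is nonzero. If $\kappa(g)$ changes sign on sets of positive measure, hypothesis (i) fails, and the obstruction coming from the proof of Theorem \ref{t:notflattt}, $\kappa(g)(\phi(p))=-\kappa(g)(\psi(p))$ for all $p$, is no longer contradictory; the corollary then does not follow from Theorem \ref{t:notflattt} used as a black box (this caveat is in fact glossed over by the paper as well). Your argument is complete precisely when $\kappa(g)$ has generically constant sign, e.g.\ if ``non-flat'' is read as nowhere flat, in which case connectedness gives the constant sign. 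Finally, your fallback for the weaker pointwise reading (semicontinuity of the projected rank) cannot work: if $\Sigma$ contains a flat open set $U$, the projected rank two minimal Lagrangian immersions in neutral ${\mathbb C}^2$ cited after Theorem \ref{t:notflattt} (see \cite{AGR}) can be transplanted, after restricting to a small piece, into $U\times U\subset\Sigma\times\Sigma$, so non-flatness at a single point is genuinely insufficient for the conclusion.
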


\vspace{0.2in}

\section{The Hamiltonian stability of minimal Lagrangian surfaces}\label{s:hamiltstabilitysection}

The Hamiltonian stability of a Hamiltonian minimal surface $S$ in a pseudo-Riemannian manifold $(\M, G)$ is given by the monotonicity of the second variation formula of the volume $V(S)$ under Hamiltonian deformations (see \cite{Oh1} and \cite{AnGer}). For a smooth compactly supported function  $u\in C^{\infty}_{c}(S)$ the second variation $\delta^2 V(S)(X)$ formula in the direction of the Hamiltonian vector field $X=J\nabla u$ is:
\[
\delta^2 V(S)(X)=\int_{S} \Big((\Delta u)^2 -  Ric^{G}(\nabla u,\nabla u)-2G(h(\nabla u,\nabla u),nH)+G^2(nH,J\nabla u)\Big)dV,
\]
where $h$ is the second fundamental form of $S$, $Ric^G$ is the Ricci curvature tensor of the metric $G$, and $\Delta$ with $\nabla$ denote the Laplacian and gradient, respectively, with respect to the metric $G$ induced on $S$. For the Hamiltonian stability of projected rank one Hamiltonian $G^{\epsilon}$-minimal surfaces we give the following Theorem:
\begin{theo}\label{t:hstability}
Let $\Phi=(\phi,\psi)$ be of projected rank one Hamiltonian $G^{\epsilon}$-minimal immersion in $(\Sigma_1\times\Sigma_2,G^{\epsilon})$ such that $\kappa(g_1)\leq -2k_{\phi}^2$ and $\kappa(g_2)\leq -2k_{\psi}^2$ along the curves $\phi$ and $\psi$ respectively. Then $\Phi$ is a  local minimizer of the volume in its Hamiltonian isotopy class.
\end{theo}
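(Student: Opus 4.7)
My plan is to substitute the explicit geometric data of a projected-rank-one Hamiltonian $G^{\epsilon}$-minimal surface directly into the second variation formula that precedes the theorem, and to observe that the stated curvature hypothesis turns the integrand into a pointwise sum of squares. No integration by parts or spectral analysis will be required.

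By Theorem \ref{t:rankonethe} I can write $\Phi(s,t)=(\phi(s),\psi(t))$ with $s,t$ the arc-length parameters of $\phi,\psi$, so that the induced metric is the flat metric $\Phi^{\ast}G^{\epsilon}=ds^{2}+\epsilon\,dt^{2}$. Its Laplace--Beltrami operator and gradient are
\[
\Delta u = u_{ss}+\epsilon u_{tt},\qquad \nabla u = u_{s}\,\partial_{s}+\epsilon u_{t}\,\partial_{t},
\]
and the proof of Theorem \ref{t:rankonethe} already records the mean curvature $2\vec H^{\,\epsilon}=k_{\phi}J\Phi_{s}+\epsilon k_{\psi}J\Phi_{t}$ together with the only nonvanishing components $h^{\epsilon}_{sss}=k_{\phi}$ and $h^{\epsilon}_{ttt}=\epsilon k_{\psi}$ of the cubic form. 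Since $G^{\epsilon}$ is a product metric its Ricci tensor is block-diagonal, so that $Ric^{\epsilon}(\Phi_{s},\Phi_{s})=\kappa(g_{1})$, $Ric^{\epsilon}(\Phi_{t},\Phi_{t})=\kappa(g_{2})$ and $Ric^{\epsilon}(\Phi_{s},\Phi_{t})=0$.

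Using these ingredients, together with the fact that $J$ is a $G^{\epsilon}$-isometry and that $\epsilon^{2}=1$, a direct calculation will give
\[
Ric^{\epsilon}(\nabla u,\nabla u)=\kappa(g_{1})u_{s}^{2}+\kappa(g_{2})u_{t}^{2},\qquad G^{\epsilon}(h(\nabla u,\nabla u),2H)=k_{\phi}^{2}u_{s}^{2}+k_{\psi}^{2}u_{t}^{2},
\]
\[
G^{\epsilon}(2H,J\nabla u)=k_{\phi}u_{s}+\epsilon k_{\psi}u_{t}.
\]
Substituting into the second variation formula (with $n=2$) yields
\[
\delta^{2}V(X)=\int_{S}\!\Bigl[(u_{ss}+\epsilon u_{tt})^{2} - \bigl(\kappa(g_{1})+2k_{\phi}^{2}\bigr)u_{s}^{2} - \bigl(\kappa(g_{2})+2k_{\psi}^{2}\bigr)u_{t}^{2} + (k_{\phi}u_{s}+\epsilon k_{\psi}u_{t})^{2}\Bigr]ds\,dt.
\]
Under the hypothesis $\kappa(g_{1})\leq -2k_{\phi}^{2}$, $\kappa(g_{2})\leq -2k_{\psi}^{2}$ the two middle coefficients become non-negative, so each of the four summands in the integrand is a pointwise square. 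Hence $\delta^{2}V(X)\geq 0$ for every compactly supported $u$, which is Hamiltonian stability, and therefore gives the asserted local minimality in the Hamiltonian isotopy class.

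The one point requiring care is the neutral case $\epsilon=-1$, where the induced metric on $S$ is Lorentzian and both $(\Delta u)^{2}=(u_{ss}-u_{tt})^{2}$ and the mixed-sign expression $(k_{\phi}u_{s}-k_{\psi}u_{t})^{2}$ could at first sight look destabilising. The saving feature is purely algebraic: both $\epsilon$'s remain inside perfect squares so they do not spoil positivity, and the factor $2$ in the hypothesis $\kappa(g_{i})\leq -2k_{\cdot}^{2}$ is calibrated exactly to dominate the destabilising $-2G^{\epsilon}(h(\nabla u,\nabla u),2H)$ contribution, leaving the pointwise-positive sum of squares that the argument needs. Once this alignment is spotted, the entire proof is algebraic and sidesteps the integration-by-parts and eigenvalue estimates that Hamiltonian-stability results usually demand.
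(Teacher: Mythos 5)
Your proof is correct and follows essentially the same route as the paper: parametrise the rank-one surface by arclengths, compute $Ric^{\epsilon}$, $h^{\epsilon}$, $\vec H^{\epsilon}$ as in Theorem \ref{t:rankonethe}, and substitute into the second variation formula. The only (harmless) difference is bookkeeping at the end: you keep $G^{\epsilon}(2\vec H^{\epsilon},J\nabla u)^2$ as a separate square so the integrand is manifestly a sum of nonnegative terms, while the paper merges it with the $-2G^{\epsilon}(h^{\epsilon}(\nabla u,\nabla u),2\vec H^{\epsilon})$ term and then absorbs the resulting cross term $2\epsilon u_su_tk_{\phi}k_{\psi}$ using the same hypothesis.
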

\begin{proof}
Let $\Phi=(\phi,\psi):S\rightarrow \Sigma_1\times\Sigma_2$ be of projected rank one Hamiltonian $G^{\epsilon}$-minimal immersion and let $(s,t)$ be the corresponded arclengths of $\phi$ and $\psi$, respectively.

Then $(\phi_s,j_1\phi_s)$ is an oriented orthonormal frame of $(\Sigma_1,g_1)$ and $(\psi_t,j_2\psi_t)$ is an oriented orthonormal frame of $(\Sigma_2,g_2)$.

Therefore,
\begin{eqnarray}
Ric^{\epsilon}(\Phi_s,\Phi_s)&=&\epsilon G^{\epsilon}(R( \Phi_t,\Phi_s)\Phi_s,\Phi_t)+G^{\epsilon}(R( J\Phi_s,\Phi_s)\Phi_s,J\Phi_s)\nonumber \\
&&\qquad\qquad\qquad\qquad \qquad\qquad +\epsilon G^{\epsilon}(R( J\Phi_t,\Phi_s)\Phi_s,J\Phi_t)\nonumber \\
&=&G^{\epsilon}(R( J\Phi_s,\Phi_s)\Phi_s,J\Phi_s)\nonumber \\
&=&G^{\epsilon}((R_1( j_1\phi_s,\phi_s)\phi_s,R_2( j_2\psi_s,\psi_s)\psi_s),(j_1\phi_s,j_2\psi_s))\nonumber \\
&=&G^{\epsilon}((R_1( j_1\phi_s,\phi_s)\phi_s,0),(j_1\phi_s,0))\nonumber \\
&=&g_1(R_1( j_1\phi_s,\phi_s)\phi_s,j_1\phi_s)\nonumber \\
&=&\kappa(g_1).\nonumber
\end{eqnarray}
Moreover, a similar computation gives
\[
Ric^{\epsilon}(\Phi_t,\Phi_t)=\kappa(g_2)\quad\mbox{and}\quad Ric^{\epsilon}(\Phi_s,\Phi_t)=0.
\]
Then, for every $u(s,t)\in C^{\infty}_c(S)$ we have
\[
Ric^{\epsilon}(\nabla u,\nabla u)=\kappa(g_1)u_s^2+\kappa(g_2)u_t^2.
\]
Furthermore 
\[
G^{\epsilon}(h^{\epsilon}(\nabla u,\nabla u),2\vec{H}^{\epsilon})=u_s^2k_{\phi}^2+u_t^2k_{\psi}^2,
\] 
and 
\[
G^{\epsilon}(2\vec{H}^{\epsilon},J\nabla u)=u_s k_{\phi}+\epsilon u_t k_{\psi}.
\]
The second variation formula for the volume functional with respect of the Hamiltonian vector field $X=J\nabla u$ therefore becomes
\begin{eqnarray}
\delta^2 V(S)(X)&=&\int_{S} (\Delta^{\epsilon} u)^2 -  Ric^{\epsilon}(\nabla u,\nabla u)-2G^{\epsilon}(h^{\epsilon}(\nabla u,\nabla u),2\vec{H}^{\epsilon})+G^{\epsilon}(2\vec{H}^{\epsilon},J\nabla u)^2\nonumber \\
&=&\int_{S} (u_{ss}+\epsilon u_{tt})^2-u_s^2\kappa(g_1)-u_t^2\kappa(g_2)-(u_s k_{\phi}-\epsilon u_t k_{\psi})^2\nonumber \\
&=&\int_{S} (u_{ss}+\epsilon u_{tt})^2+u_s^2(-\kappa(g_1)-k_{\phi}^2)+u_t^2(-\kappa(g_2)-k_{\psi}^2)+2\epsilon u_s u_t k_{\phi}k_{\psi}.\nonumber
\end{eqnarray}
Assuming that $\kappa(g_1)\leq -2k_{\phi}^2$ and $\kappa(g_2)\leq -2k_{\psi}^2$ along the curves $\phi$ and $\psi$, respectively we conclude that the second variation formula is nonnegative.
\end{proof}
 
Every minimal Lagrangian surface in a pseudo-K\"ahler 4-manifold is unstable \cite{An2}. The following Corollary explores the Hamiltonian stability of $G^-$-minimal Lagrangian surfaces in $\Sigma_1\times\Sigma_2$:
\begin{coro}
Let $(\Sigma_1,g_1)$ and $(\Sigma_2,g_2)$ be Riemannian two manifolds such that their Gauss curvatures $\kappa(g_1)$ and $\kappa(g_2)$ are both negative. Then every $G^-$-minimal Lagrangian surface is a  local minimizer of the volume in its Hamiltonian isotopy class.
\end{coro}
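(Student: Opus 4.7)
The plan is to observe that this statement is essentially a direct assembly of Theorems \ref{t:notflattt} and \ref{t:hstability} together with Theorem \ref{t:rankonethe}, so the task reduces to verifying that the hypotheses of each cited result are met in the present setting.

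First I would reduce to the projected rank one case. We are in the neutral K\"ahler setting, so $\epsilon = -1$. By hypothesis $\kappa(g_1) < 0$ and $\kappa(g_2) < 0$, so $\kappa(g_1)\kappa(g_2) > 0$ everywhere, and hence $\epsilon \kappa(g_1)\kappa(g_2) < 0$ everywhere. In particular the metrics have no flat points at all, so both are generically non-flat and condition (i) of Theorem \ref{t:notflattt} is satisfied. Consequently every $G^-$-minimal Lagrangian immersion $\Phi = (\phi,\psi):S\to \Sigma_1\times\Sigma_2$ must be of projected rank one.

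Next I would invoke Theorem \ref{t:rankonethe} to describe such a $\Phi$ explicitly. It tells us that, in suitable local coordinates $(s,t)$, we have $\Phi(s,t) = (\phi(s),\psi(t))$ with $\phi,\psi$ regular curves, that $\Phi^{\ast}G^-$ is flat, and that minimality of a projected rank one Lagrangian immersion is equivalent to $\phi$ and $\psi$ being geodesics. Hence the geodesic curvatures satisfy $k_{\phi} \equiv 0$ and $k_{\psi} \equiv 0$ along the curves.

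Finally I would apply Theorem \ref{t:hstability}. The required curvature conditions read $\kappa(g_1) \leq -2k_{\phi}^2 = 0$ and $\kappa(g_2) \leq -2k_{\psi}^2 = 0$, which hold since both Gauss curvatures are strictly negative. The conclusion of Theorem \ref{t:hstability} then gives that $\Phi$ is a local minimizer of the volume in its Hamiltonian isotopy class, which is exactly the statement of the corollary. There is no genuine obstacle here: the only thing to be a little careful about is lining up the sign conventions so that condition (i) of Theorem \ref{t:notflattt} really does apply (it does, precisely because $\epsilon = -1$ flips the sign of the product), after which everything propagates automatically through Theorems \ref{t:rankonethe} and \ref{t:hstability}.
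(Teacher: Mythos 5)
Your proposal is correct and follows essentially the same route as the paper: reduce to projected rank one via Theorem \ref{t:notflattt}(i) (noting $\epsilon=-1$ makes $\epsilon\kappa(g_1)\kappa(g_2)<0$), conclude $k_{\phi}=k_{\psi}=0$ since the components are geodesics, and then apply Theorem \ref{t:hstability}. The only difference is that you cite Theorem \ref{t:rankonethe} explicitly for the geodesic fact, which the paper uses implicitly; this is a harmless (indeed clarifying) elaboration.
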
 
\begin{proof}
From Theorem \ref{t:notflattt} every $G^-$-minimal Lagrangian immersion must be of projected rank one and thus it is parametrised by $\Phi=(\phi,\psi):S\rightarrow \Sigma_1\times\Sigma_2$, where $\phi=\phi(s)$ and $\psi=\psi(t)$, where $s,t$ are arclengths. Assuming that $\kappa(g_1),\kappa(g_2)$ are both negative we have that:
\[
\kappa(g_1)(s)\leq -2k_{\phi}^2(s)=0,\qquad \kappa(g_2)(t)\leq -2k_{\psi}^2(t)=0,
\]
and therefore from Theorem \ref{t:hstability} the $G^-$-minimal Lagrangian immersion $\Phi$ is stable under Hamiltonian deformations.
\end{proof}

We also have the Corollary:

\begin{coro}
Let $(\Sigma,g)$ be a Riemannian two manifold of negative Gaussian curvature. Then every $G^-$-minimal Lagrangian surface immersed in $\Sigma\times\Sigma$ is a local minimizer of the volume in its Hamiltonian isotopy class.
\end{coro}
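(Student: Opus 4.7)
The plan is to observe that this final Corollary is essentially a direct specialization of the preceding Corollary to the diagonal product case $(\Sigma_1,g_1)=(\Sigma_2,g_2)=(\Sigma,g)$. So the proof will be extremely short: I would just verify that the hypotheses of the preceding Corollary hold and invoke it.

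More concretely, I would begin by setting $\Sigma_1=\Sigma_2=\Sigma$ and $g_1=g_2=g$, so that the product 4-manifold $\Sigma\times\Sigma$ is equipped with the neutral K\"ahler structure $(G^-,J,\Omega^-)$ defined in Section~\ref{s:construction}. The Gauss curvatures satisfy $\kappa(g_1)=\kappa(g_2)=\kappa(g)$, and by hypothesis $\kappa(g)<0$. Thus both $\kappa(g_1)$ and $\kappa(g_2)$ are negative, which is precisely the assumption of the preceding Corollary.

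Applying that Corollary, every $G^-$-minimal Lagrangian surface immersed in $\Sigma\times\Sigma$ is a local minimizer of the volume in its Hamiltonian isotopy class, which is the claim. (Implicitly, this also uses Theorem~\ref{t:notflattt}(i) to ensure such a surface is of projected rank one, so that the curvature bounds $\kappa(g_1)\leq -2k_\phi^2=0$ and $\kappa(g_2)\leq -2k_\psi^2=0$ along the geodesic factors $\phi,\psi$ make sense and follow automatically from $\kappa(g)<0$, bringing us into the scope of Theorem~\ref{t:hstability}.)

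There is no real obstacle here: the content of the Corollary has already been proven in the more general preceding Corollary, and the only task is to point out that the diagonal case satisfies the curvature hypothesis. The main value of stating it separately is conceptual — emphasizing that on a single negatively curved surface $(\Sigma,g)$, the neutral K\"ahler product $\Sigma\times\Sigma$ supports a rich family of Hamiltonian-stable minimal Lagrangians, namely products $\gamma_1\times\gamma_2$ of geodesics of $(\Sigma,g)$.
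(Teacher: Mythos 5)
Your proposal is correct and matches the paper's intent exactly: the corollary is stated without proof precisely because it is the diagonal specialization $(\Sigma_1,g_1)=(\Sigma_2,g_2)=(\Sigma,g)$ of the preceding corollary, whose proof combines Theorem~\ref{t:notflattt} (projected rank one, hence $\phi,\psi$ geodesics with $k_\phi=k_\psi=0$) with Theorem~\ref{t:hstability} via the trivial bounds $\kappa(g)\leq -2k_\phi^2=0$ and $\kappa(g)\leq -2k_\psi^2=0$. Your parenthetical check that $\kappa(g)<0$ ensures the hypotheses of both theorems hold in the diagonal case is exactly the only verification needed.
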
 

\noindent
{\bf Example 1.} It is easy to see that if $(\Sigma,g)$ is a Riemannian two manifold of constant Gauss curvature $c\neq 0$, then every $G^-$-minimal Lagrangian surface immersed in $\Sigma\times\Sigma$ is a local minimizer of the volume in its Hamiltonian isotopy class if and only if $c<0$.  

\vspace{0.1in}

\noindent
{\bf Example 2.} Let $L({\mathbb S}^3)$ and $L^{+}(Ad{\mathbb S}^3)$ be the spaces of oriented closed geodesics in the three sphere and anti-De Sitter 3-space, respectively. Then  $L({\mathbb S}^3)={\mathbb S}^2\times {\mathbb S}^2$ and $L^{+}(Ad{\mathbb S}^3)={\mathbb H}^2\times {\mathbb H}^2$ (see \cite{AGK} and \cite{An4}). 
The previous example generalises a result obtained in \cite{AnGer} which states that every minimal Lagrangian surface in the space of closed oriented geodesics $L({\mathbb S}^3)$ is Hamiltonian unstable and every Lagrangian minimal surface in $L^{+}(Ad{\mathbb S}^3)$ is Hamiltonian stable.

\vspace{0.1in}

The following Proposition investigates the Hamiltonian stability of $G^+$-minimal Lagrangian surfaces: 
\begin{prop}
Let $(\Sigma_1,g_1)$ and $(\Sigma_2,g_2)$ be Riemannian two manifolds with Gaussian curvatures bounded in the following way:
\[
c_1\leq |\kappa(g_1)(x)|\leq C_1,\quad c_2\leq |\kappa(g_2)(y)|\leq C_2,\quad\mbox{and}\quad \kappa(g_1)(x)\kappa(g_2)(y)<0,
\]
for every pair $(x,y)\in \Sigma_1\times\Sigma_2$ and for some positive constants $c_1,c_2,C_1,C_2$. Then, every $G^+$-minimal Lagrangian surface is Hamiltonian unstable and therefore is $G^+$-unstable.
\end{prop}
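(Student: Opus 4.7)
The plan is to combine the structural classification from Theorems \ref{t:notflattt} and \ref{t:rankonethe} with the second variation formula derived inside the proof of Theorem \ref{t:hstability}, and then construct a destabilizing Hamiltonian variation by a two-parameter scaling argument.

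First, since $\epsilon=+1$ and $\kappa(g_1)(x)\kappa(g_2)(y)<0$ pointwise with both curvatures uniformly bounded away from zero, hypothesis (i) of Theorem \ref{t:notflattt} is satisfied, so every $G^+$-minimal Lagrangian immersion $\Phi:S\to\Sigma_1\times\Sigma_2$ is of projected rank one. Theorem \ref{t:rankonethe} then yields the local parametrization $\Phi(s,t)=(\phi(s),\psi(t))$ with $\phi,\psi$ geodesics, so $k_\phi\equiv k_\psi\equiv 0$ along $\Phi$. Substituting this (and $\epsilon=1$) into the second variation identity computed inside the proof of Theorem \ref{t:hstability} collapses it to
\[
\delta^2V(S)(J\nabla u)=\int_S\bigl[(u_{ss}+u_{tt})^2-\kappa(g_1)(\phi(s))\,u_s^2-\kappa(g_2)(\psi(t))\,u_t^2\bigr]\,ds\,dt.
\]
Since $\kappa(g_1)$ depends only on the first factor and $\kappa(g_2)$ only on the second, the pointwise sign constraint forces each to have a definite sign; after swapping the factors if necessary we may assume $c_1\le\kappa(g_1)\le C_1$ and $-C_2\le\kappa(g_2)\le -c_2$. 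The only negative contribution to the integrand is $-\kappa(g_1)u_s^2$, and the task is to construct $u\in C^\infty_c(S)$ making it dominate.

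For the destabilizing variation I would test against the two-parameter family
\[
u_{L,\mu}(s,t):=f(s/L)\,\eta(t/\mu),\qquad f,\eta\in C^\infty_c(\mathbb{R}),
\]
with $f$ fixed so that $\int(f')^2>0$ and $L,\mu\to\infty$. The integrals factor as products of one-variable integrals, and after the substitutions $s=Ls'$, $t=\mu t'$ and integration by parts on the cross term ($\int ff''=-\int(f')^2$, $\int\eta\eta''=-\int(\eta')^2$), a direct computation gives
\[
\delta^2V(S)(J\nabla u_{L,\mu})\le \mu L^{-1}\int\eta^2\,\Bigl[L^{-2}\int(f'')^2-c_1\int(f')^2\Bigr]+O(L\mu^{-1}),
\]
where the error absorbs $\int u_{tt}^2$, the cross term $2\int u_{ss}u_{tt}$, and the upper bound $-\kappa(g_2)u_t^2\le C_2u_t^2$. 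Fixing $L$ large enough that the bracket is strictly negative, and then $\mu$ large so that the main term of order $\mu L^{-1}$ dominates the error of order $L\mu^{-1}$, gives $\delta^2V<0$. This exhibits a compactly supported Hamiltonian variation that strictly decreases the volume, so $\Phi$ is Hamiltonian unstable; since Hamiltonian vector fields are in particular admissible variations of $\Phi$, $G^+$-instability follows.

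The main obstacle is the non-negative biharmonic term $(u_{ss}+u_{tt})^2$, which competes with the single good term $-\kappa(g_1)u_s^2$: because $\int u_{ss}^2$ scales as the fourth power of frequency while $\int u_s^2$ only as the square, any naive high-frequency test function is defeated. The trick is to work in the opposite regime of \emph{slow} variation in $s$ (suppressing $\int u_{ss}^2$ by a factor $L^{-2}$ against $c_1\int u_s^2$) while simultaneously dilating in $t$ (driving the unfavourable $\int u_{tt}^2$ and $-\kappa(g_2)u_t^2$ to strictly lower order in $\mu$), exactly as $u_{L,\mu}$ realises. A subsidiary issue is that $S$ must admit charts covering arbitrarily large rectangles in $(s,t)$-coordinates; this is automatic whenever at least one of $\phi,\psi$ fails to close up, a situation which is generic under the hypothesis $\kappa(g_1)\kappa(g_2)<0$ (for instance when one factor is complete of negative curvature and hence admits no closed geodesic).
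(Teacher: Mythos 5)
Your skeleton coincides with the paper's: reduction to projected rank one via Theorem \ref{t:notflattt}, the identification $k_\phi=k_\psi=0$, and the resulting formula $\delta^2V=\int_S\bigl[(u_{ss}+u_{tt})^2-\kappa(g_1)u_s^2-\kappa(g_2)u_t^2\bigr]$ are exactly the paper's starting point. Where you differ is the endgame: the paper produces a direction of non-positive second variation by invoking the quadratic-functional argument of Theorem 3 of \cite{AnGer} (and also exhibits a non-negative direction, since its notion of Hamiltonian stability is monotonicity of $\delta^2V$; for the Riemannian metric $G^+$ non-negative directions are free from high-frequency test functions, so your focus on the negative direction is harmless). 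Your anisotropic family $u_{L,\mu}=f(s/L)\eta(t/\mu)$, with $L$ fixed large and $\mu\to\infty$, is correctly computed and is a cleaner, self-contained substitute for that citation -- as far as it goes.

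The genuine gap is the point you label ``subsidiary''. Your test functions require the $(s,t)$-domain of $S$ to contain rectangles whose $s$-extent is of order $c_1^{-1/2}$ and whose $t$-extent is arbitrarily large; this fails precisely when $\phi$ and $\psi$ are both closed geodesics, so that $S$ is a torus with a bounded fundamental domain. Your two reasons for discarding this case do not hold: a complete surface of negative curvature can perfectly well carry closed geodesics (only simple connectivity rules them out -- every closed hyperbolic surface has infinitely many), and ``generic'' is irrelevant because the proposition quantifies over \emph{every} $G^+$-minimal Lagrangian surface, so the compact case must be handled, not excluded. Moreover the gap is not merely technical: take $\Sigma_1$ the round sphere of curvature $1$, $\Sigma_2$ a closed hyperbolic surface, and $\Phi$ the product of a great circle with a closed geodesic; expanding $u$ in Fourier modes $e^{i(ms+bt)}$ with $m\in\mathbb{Z}$, each mode contributes $(m^2+b^2)^2-m^2+b^2\geq 0$, so $\delta^2V\geq 0$ for every Hamiltonian variation and no choice of test function -- yours or any other -- can produce negativity there. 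Hence your argument establishes the conclusion only when the geodesic in the positively curved factor does not close up (or is long enough compared with $c_1^{-1/2}$), and the remaining compact case needs either an additional hypothesis or a genuinely different idea; in fairness, the paper's own proof, which defers this step to \cite{AnGer}, faces the same difficulty.
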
 
\begin{proof}
Consider again a Lagrangian minimal immersion $\Phi=(\phi,\psi):S\rightarrow \Sigma_1\times\Sigma_2$. From Theorem \ref{t:notflattt}, we have that $\phi=\phi(s)$ and $\psi=\psi(t)$ are geodesics of $\Sigma_1$ and $\Sigma_2$, respectively, with $(s,t)$ chosen to be the corresponding arc-lengths.

Then $(\phi_s,j_1\phi_s)$ is an oriented orthonormal frame of $(\Sigma_1,g_1)$ and $(\psi_t,j_2\psi_t)$ is an oriented orthonormal frame of $(\Sigma_2,g_2)$.

A similar computation as in Theorem \ref{t:hstability} gives,
\[
Ric^+(\Phi_s,\Phi_s)=\kappa(g_1),\qquad Ric^+(\Phi_t,\Phi_t)=\kappa(g_2),\qquad Ric^+(\Phi_s,\Phi_t)=0,
\]
and therefore, the second variation formula for the volume of $S$ in the direction of the Hamiltonian vector field $X=J\nabla u$ is
\[
\delta^2 V(S)(X)=\int_{S} \Big((u_{ss}-u_{tt})^2-\kappa(g_1)u_s^2-\kappa(g_2)u_t^2\Big)dV.
\]
Assume that $\kappa(g_1)<0$. Then, $\kappa(g_2)>0$ and
\[
\delta^2 V(S)(X)\geq \int_{S} \Big((u_{ss}-u_{tt})^2-C_1u_s^2+c_2u_t^2\Big)dV.
\]
Thus, for the quadratic functional 
\[
Q_1(u):=\int_{S}-C_1u_s^2+c_2u_t^2,
\] 
there exists $u^1\in C^{\infty}_{c}(S)$ such that $Q_1(u^1)\geq 0$. Therefore $\delta^2 V(S)(J\nabla u^1)\geq 0$.

On the other hand, for every $u\in C^{\infty}_{c}(S)$
\[
\delta^2 V(S)(J\nabla u)\leq \int_{S} \Big((u_{ss}+u_{tt})^2-c_1u_s^2+C_2u_t^2\Big)dV.
\]
Then for the quadratic functional
\[
Q_2(u):=\int_{S}-c_1u_s^2+C_2u_t^2,
\] 
there exists $u^2\in C^{\infty}_{c}(S)$ such that $Q_2(u^2)\leq 0$. A similar argument as in the proof of Theorem 3 of \cite{AnGer} establishes the existence of $u^3\in C^{\infty}_{c}(S)$ such that 
\[
\int_{S} \Big((u^3_{ss}+u^3_{tt})^2-c_1(u^3_s)^2+C_2(u^3_t)^2\Big)dV\leq 0,
\]
which implies that $\delta^2 V(S)(J\nabla u^3)\leq 0$ and therefore the second variation formula for the volume of $S$ under Hamiltonian deformations is indefinite.
\end{proof}

\section*{Appendix}

The equation (\ref{e:ricciform}) has been proved in \cite{Da} (see also \cite{Br} for an alternative proof of it) for the case of a Lagrangian submanifold immersed in a Riemannian K\"ahler manifold. In the Appendix, we show that the same equation holds true for Lagrangian immersions in a neutral K\"aher 4-manifold.

\medskip

Suppose now that $\Phi: S\rightarrow M^4$ is a smooth Lagrangian immersion of a surface $S$ in a neutral K\"ahler 4-manifold $(M,J,G,\Omega)$. Let $(e_1,e_2)$ be an orthonormal frame of the induced metric $\Phi^{\ast}G$ such that $|e_1|^2=-|e_2|^2=1$. The Lagrangian condition implies that $(e_1,e_2,Je_1,Je_2)$ is an orthonormal frame of $G$. If $h$ is the second fundamental form of $\Phi$ and $H=\frac{1}{2}(h(e_1,e_1)-h(e_2,e_2))$ is the mean curvature vector, we consider the Maslov 1-form $a_{H}$ of $S$, defined by
\[
a_H:=G(JH,.),
\]
\begin{prop}
Let $\Phi$ be a Lagrangian immersion in a neutral K\"ahler 4-manifold $(M,J,G,\Omega)$ and let $a_{H}$ be its Maslov 1-form. Then, \[
da_H=\Phi^{\ast}\bar\rho,
\]
where $\bar\rho$ is the Ricci form of $G$. 
\end{prop}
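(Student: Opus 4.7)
The plan is to imitate Dazord's Riemannian proof, carefully tracking the signs that appear when the ambient metric is neutral. First, extend the pseudo-orthonormal tangent frame $(e_1,e_2)$ (with $\epsilon_1:=|e_1|^2=1$ and $\epsilon_2:=|e_2|^2=-1$) to a local pseudo-orthonormal frame $(e_1,e_2,Je_1,Je_2)$ of $(M,G)$; this is well defined along $\Phi(S)$ by the Lagrangian condition together with the $G$-compatibility of $J$, and one checks $|Je_i|^2=\epsilon_i$. Since $\nabla J=0$, the Levi-Civita connection $1$-forms $\omega^\alpha_\beta$ of this frame satisfy the relations $\omega^{Je_j}_{Je_i}=\omega^{e_j}_{e_i}$ and $\omega^{Je_j}_{e_i}=\omega^{e_j}_{Je_i}$. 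Equivalently, the trilinear form $C(X,Y,Z):=G(h(X,Y),JZ)$ is totally symmetric in its three arguments, which is the standard Lagrangian symmetry of $h$ (derived by writing $C(X,Y,Z)=-\Omega(\nabla_X Y,Z)$ and using $\Phi^{\ast}\Omega=0$).

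Next, I would write the Maslov $1$-form explicitly in this frame. With $H=\tfrac12\sum_i\epsilon_i h(e_i,e_i)$ (which recovers the given formula $H=\tfrac12(h(e_1,e_1)-h(e_2,e_2))$) and with $(\theta^1,\theta^2)$ the dual coframe, expansion yields
\[
a_H \;=\; \tfrac12\sum_{i,k}\epsilon_i\epsilon_k\,C(e_i,e_i,e_k)\,\theta^k.
\]
Applying $d$ and invoking Cartan's second structure equation $d\omega^\alpha_\beta+\omega^\alpha_\gamma\wedge\omega^\gamma_\beta=\mathcal{R}^\alpha_\beta$, the purely tangential connection pieces cancel against each other by virtue of the total symmetry of $C$ (this is the same cancellation that drives Dazord's Riemannian proof), and the surviving terms are exactly the ambient curvature $2$-forms $\mathcal{R}^{Je_k}_{e_i}$ restricted to $S$, weighted by $\epsilon_i\epsilon_k$.

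The final step is to recognise this surviving expression as $\Phi^{\ast}\bar\rho$. For a pseudo-K\"ahler manifold of arbitrary signature, the Ricci form is given in any pseudo-orthonormal frame by $\bar\rho(X,Y)=\tfrac12\sum_\alpha\epsilon_\alpha\,G(R(X,Y)e_\alpha,Je_\alpha)$. Splitting this ambient sum into tangential and normal contributions along $\Phi(S)$ and using the Lagrangian identity $J(TS)=NS$ together with the $J$-invariance $R(X,Y)\circ J=J\circ R(X,Y)$ (a consequence of $\nabla J=0$), the normal contribution folds onto the tangential one, producing precisely the contraction obtained from Cartan's equations in the previous step.

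The main obstacle is the sign bookkeeping: each contraction over a pseudo-orthonormal frame introduces an $\epsilon$, and the identity only holds because the signs appear in matching pairs in the expansions of both $a_H$ and $\bar\rho$, so that they cancel rather than produce an overall sign flip relative to the Riemannian computation. Once this is verified, the remaining algebra is identical to that in \cite{Da} and \cite{Br}, now carried out in the neutral K\"ahler $4$-manifold setting.
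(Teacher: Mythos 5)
Your outline is correct and would deliver the proposition, but it organizes the computation differently from the paper, so a comparison is worthwhile. You start from $a_H$, express it as a trace of the cubic form $C(X,Y,Z)=G(h(X,Y),JZ)$, differentiate via Cartan's structure equations so that the tangential connection terms cancel by the total symmetry of $C$ (together with the pseudo-orthonormal skew-symmetries of the connection forms), and conclude with the frame identity $\bar\rho(X,Y)=\tfrac12\sum_\alpha\epsilon_\alpha G(R(X,Y)E_\alpha,JE_\alpha)$, folding the normal trace onto the tangential one through $J(TS)=NS$ and $R(X,Y)\circ J=J\circ R(X,Y)$. The paper runs the same Dazord-type argument in the opposite direction and in tensorial form: it starts from $2\Phi^{\ast}\bar\rho(e_1,e_2)=\overline{\mathrm{Ric}}(e_1,Je_2)$ written over the tangential frame only (your folding step, used implicitly), substitutes the Codazzi--Mainardi equations --- the tensorial counterpart of your structure-equation step --- and then removes the residual terms using the neutral-signature relations $\omega^2_{11}=\omega^1_{12}$ and $\omega^2_{21}=\omega^1_{22}$, which is precisely the cancellation you attribute to the symmetry of $C$. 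So the two proofs are equivalent in content: yours is the moving-frames packaging of \cite{Da} and \cite{Br}, arguably cleaner and more conceptual, while the paper's is more explicit and verifies the neutral-signature cancellations by hand. When you carry out your version, fix the sign conventions in the sketch: with $C(X,Y,Z)=G(h(X,Y),JZ)$ and $a_H=G(JH,\cdot)$ one gets $a_H(e_k)=-\tfrac12\sum_i\epsilon_i\,C(e_i,e_i,e_k)$, so your displayed formula for $a_H$ has the wrong sign and carries an extra $\epsilon_k$ unless $\theta^k$ means the metric dual $G(e_k,\cdot)$; likewise the relation $\omega^{Je_j}_{e_i}=\omega^{e_j}_{Je_i}$ picks up a sign depending on how the connection forms are indexed. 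Neither slip affects the viability of the argument, as the paper's explicit computation confirms that the cancellations do occur in the neutral case.
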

\begin{proof}
A straightforward computation gives:
\[
2\Phi^{\ast}\bar\rho(e_1,e_2)=\overline{\mbox{Ric}}(e_1,Je_2)=-G(\bar{R}(e_1,e_2)e_1,Je_1)+G(\bar{R}(e_1,e_2)e_2,Je_2)
\]
The Codazzi-Mainardi equations are:
\[
\bot \bar{R}(e_1,e_2)e_1=\overline\nabla_{e_1}h(e_1,e_2)-h(\nabla_{e_1}e_1,e_2)-h(\nabla_{e_1}e_2,e_1)-\overline\nabla_{e_2}h(e_1,e_1)+2h(\nabla_{e_2}e_1,e_1),
\]
\[
\bot \bar{R}(e_1,e_2)e_2=\overline\nabla_{e_1}h(e_2,e_2)-2h(\nabla_{e_1}e_2,e_2)+h(\nabla_{e_2}e_1,e_2)
-\overline\nabla_{e_2}h(e_1,e_2)+h(\nabla_{e_2}e_2,e_1),
\]
where $\overline\nabla$ and $\nabla$ denote the Levi-Civita connection with respect to $G$ and $\Phi^{\ast}G$, respectively.

Then,
\begin{eqnarray}
2\Phi^{\ast}\bar\rho(e_1,e_2)&=&-G(\bar{R}(e_1,e_2)e_1,Je_1)+G(\bar{R}(e_1,e_2)e_2,Je_2)\nonumber \\
&=& -G(\overline\nabla_{e_1}h(e_1,e_2),Je_1)+G(h(\nabla_{e_1}e_1,e_2),Je_1)
+G(h(\nabla_{e_1}e_2,e_1),Je_1)\nonumber \\
&&+G(\overline\nabla_{e_2}h(e_1,e_1),Je_1)-2G(h(\nabla_{e_2}e_1,e_1),Je_1)\nonumber \\
&&+G(\overline\nabla_{e_1}h(e_2,e_2),Je_2)-2G(h(\nabla_{e_1}e_2,e_2),Je_2)+G(h(\nabla_{e_2}e_1,e_2),Je_2)\nonumber \\
&&-G(\overline\nabla_{e_2}h(e_1,e_2),Je_2)+G(h(\nabla_{e_2}e_2,e_1),Je_2)\nonumber \\
&=&G(\overline\nabla_{e_2}h(e_1,e_1),Je_1)-G(\overline\nabla_{e_1}h(e_1,e_2),Je_1)+G(\overline\nabla_{e_1}h(e_2,e_2),Je_2) \nonumber \\
&&-G(\overline\nabla_{e_2}h(e_1,e_2),Je_2)+G(h(e_1,e_2),J\nabla_{e_1}e_1)+G(h(e_1,e_1),J\nabla_{e_1}e_2)\nonumber\\
&&\qquad -2G(h(e_1,e_1),J\nabla_{e_2}e_1)-2G(h(e_2,e_2),J\nabla_{e_1}e_2)\nonumber\\
&&\qquad\qquad +G(h(e_2,e_2),J\nabla_{e_2}e_1)+G(h(e_1,e_2),J\nabla_{e_2}e_2).\nonumber
\end{eqnarray}
Set 
\[
A:=G(\overline\nabla_{e_2}h(e_1,e_1),Je_1)-G(\overline\nabla_{e_1}h(e_1,e_2),Je_1)+G(\overline\nabla_{e_1}h(e_2,e_2),Je_2) 
-G(\overline\nabla_{e_2}h(e_1,e_2),Je_2),
\]
so that,
\begin{eqnarray}
2\Phi^{\ast}\bar\rho(e_1,e_2)&=&A+G(h(e_1,e_2),J\nabla_{e_1}e_1)+G(h(e_1,e_1),J[e_1,e_2])\nonumber\\
&&\qquad -G(h(e_1,e_1),J\nabla_{e_2}e_1)-G(h(e_2,e_2),J\nabla_{e_1}e_2)\nonumber\\
&&\qquad\qquad +G(h(e_2,e_2),J[e_2,e_1])+G(h(e_1,e_2),J\nabla_{e_2}e_2)\nonumber\\
&=&A+G(2H,J[e_1,e_2])+G(h(e_1,e_2),J\nabla_{e_1}e_1)\nonumber\\
&& -G(h(e_1,e_1),J\nabla_{e_2}e_1)-G(h(e_2,e_2),J\nabla_{e_1}e_2)+G(h(e_1,e_2),J\nabla_{e_2}e_2)\nonumber\\
&=&2A-2G(JH,[e_1,e_2])+\overline\nabla_{e_1}G(h(e_1,e_1),Je_2) -\overline\nabla_{e_2}G(h(e_1,e_1),Je_1)\nonumber\\
&&\qquad+\overline\nabla_{e_2}G(h(e_2,e_2),Je_1)-\overline\nabla_{e_1}G(h(e_2,e_2),Je_2).\nonumber
\end{eqnarray}

Note that,
\begin{eqnarray}
A&=&\overline\nabla_{e_2}G(h(e_1,e_1),Je_1)-G(h(e_1,e_1),J\nabla_{e_2}e_1)\nonumber \\
&&\quad-\overline\nabla_{e_1}G(h(e_1,e_1),Je_2)+G(h(e_1,e_2),J\nabla_{e_1}e_1)\nonumber\\
&&\qquad+\overline\nabla_{e_1}G(h(e_2,e_2),Je_2)-G(h(e_2,e_2),J\nabla_{e_1}e_2)\nonumber\\
&&\quad\qquad-\overline\nabla_{e_2}G(h(e_2,e_2),Je_1)+G(h(e_1,e_2),J\nabla_{e_2}e_2).\nonumber
\end{eqnarray}

We now have,
\begin{eqnarray}
2\Phi^{\ast}\bar\rho(e_1,e_2)&=&2A-2G(JH,[e_1,e_2])+\overline\nabla_{e_1}G(h(e_1,e_1),Je_2) -\overline\nabla_{e_2}G(h(e_1,e_1),Je_1)\nonumber\\
&&\qquad+\overline\nabla_{e_2}G(h(e_2,e_2),Je_1)-\overline\nabla_{e_1}G(h(e_2,e_2),Je_2)\nonumber\\
&=&-2G(JH,[e_1,e_2])+\overline\nabla_{e_2}G(h(e_1,e_1),Je_1)-\overline\nabla_{e_1}G(h(e_1,e_1),Je_2)\nonumber\\
&&+\overline\nabla_{e_1}G(h(e_2,e_2),Je_2)-\overline\nabla_{e_2}G(h(e_2,e_2),Je_1)-2G(h(e_1,e_1),J\nabla_{e_2}e_1)
\nonumber\\
&&+2G(h(e_1,e_2),J\nabla_{e_1}e_1)-2G(h(e_2,e_2),J\nabla_{e_1}e_2)+2G(h(e_1,e_2),J\nabla_{e_2}e_2)\nonumber\\
&=&\overline\nabla_{e_2}G(2H,Je_1)-\overline\nabla_{e_1}G(2H,Je_2)-2G(JH,[e_1,e_2]) -2G(h(e_1,e_1),J\nabla_{e_2}e_1)\nonumber\\
&&+2G(h(e_1,e_2),J\nabla_{e_1}e_1)-2G(h(e_2,e_2),J\nabla_{e_1}e_2)+2G(h(e_1,e_2),J\nabla_{e_2}e_2)\nonumber\\
&=&2\overline\nabla_{e_1}G(JH,e_2)-2\overline\nabla_{e_2}G(JH,e_1)-2G(JH,[e_1,e_2]) -2G(h(e_1,e_1),J\nabla_{e_2}e_1)\nonumber\\
&&+2G(h(e_1,e_2),J\nabla_{e_1}e_1)-2G(h(e_2,e_2),J\nabla_{e_1}e_2)+2G(h(e_1,e_2),J\nabla_{e_2}e_2)\nonumber\\
&=&2da_H(e_1,e_2) -2G(h(e_1,e_1),J\nabla_{e_2}e_1)+2G(h(e_1,e_2),J\nabla_{e_1}e_1)\nonumber\\
&&\qquad-2G(h(e_2,e_2),J\nabla_{e_1}e_2)+2G(h(e_1,e_2),J\nabla_{e_2}e_2)\label{e:semjhu}
\end{eqnarray}

Let $\omega^k_{ij}$ be the functions defined by 
\[
\nabla_{e_i}e_j=\sum_{k=1}^2\omega^k_{ij}e_k.
\]
Since $|e_1|^2=-|e_2|^2=1$, we have 
\[
\omega^2_{11}=\omega^1_{12},\qquad \omega^2_{21}=\omega^1_{22},
\]
from which we deduce that
\begin{equation}\label{e:semjhu1}
G(h(e_1,e_1),J\nabla_{e_2}e_1)=G(h(e_1,e_2),J\nabla_{e_2}e_2),
\end{equation}
and
\begin{equation}\label{e:semjhu2}
G(h(e_1,e_2),J\nabla_{e_1}e_1)=G(h(e_2,e_2),J\nabla_{e_1}e_2).
\end{equation}
The proposition follows by substituting (\ref{e:semjhu1}) and (\ref{e:semjhu2}) into (\ref{e:semjhu}).

\end{proof}

\bigskip\bigskip

 Department of Mathematics and Statistics, University of Cyprus, P.O. Box 20537, 1678 Nicosia, Cyprus 

E-mail: \mail{georgiou.g.nicos}{ucy.ac.cy}


\begin{thebibliography}{XXXXX}

\bibitem{AGK} {D. Alekseevsky, B. Guilfoyle, W. Klingenberg, \emph{On the geometry of spaces of oriented geodesics}, Ann. Global Anal. Geom. \textbf{40} (2011)  1--21}

\bibitem{An2} {H. Anciaux, \emph{Minimal submanifolds in pseudo-Riemannian geometry},  World Scientific, (2010)}

\bibitem{An4} {H. Anciaux, \em Space of geodesics of pseudo-Riemannian space forms and normal congruences of hypersurfaces, \em to appear in Transactions of the AMS }

\bibitem{AnCa}{H. Anciaux, I. Castro, \emph{Construction of Hamiltonian-minimal Lagrangian submanifolds in complex Euclidean space},  Results in Math. {\bf 60} (2011) 325--349} 
 
\bibitem{AnGer}{H. Anciaux, N. Georgiou, \em Hamiltonian stability of Hamiltonian minimal Lagrangian submanifolds in pseudo- and para- K\"ahler manifolds\em, Adv. in Geom. (to appear)}

\bibitem{AGR} {H. Anciaux, B. Guilfoyle, P. Romon, \emph{Minimal submanifolds in the tangent bundle of a Riemannian surface},  J. Geometry and Physics. \textbf{61} (2011)  237--247}

\bibitem{AR} {H. Anciaux, P. Romon, \emph{A canonical structure of the tangent bundle of a pseudo- or para- K\"ahler manifold}, arxiv:1301.4638}

\bibitem{Br}{R.L. Bryant, \emph{Minimal Lagrangian submanifolds of K\"ahler-Einstein manifolds}, Chaohao,
G. et al. (eds.) Differential Geometry and Differential Equations. Lect. Notes Math., \textbf{1255} Berlin Heidelberg New York: Springer (1987), 1--12}

\bibitem{CTU} {I. Castro, F. Torralbo, F. Urbano, \emph{On Hamiltonian stationary Lagrangian spheres in non-Einstein K\"ahler surfaces}, Math. Z. \textbf{271} (2012) 259--270}

\bibitem{Da}{P. Dazord, \emph{Sur la g\'ometrie des sous-fibr\'es et des feuilletages lagrangiense}, Ann. Sci. \'Ec.
Norm. Super., IV. Ser. \textbf{13},  (1981) 465--480}

\bibitem{De}{A. Derdzi\'nski, \emph{Self-dual K\"ahler manifolds and Einstein manifolds of
dimension four}, Compositio Math \textbf{49}(3), (1983) 405--433}

\bibitem{Ge}{N. Georgiou, \emph{On area stationary surfaces in the space of oriented geodesics of hyperbolic
3-space}, Math. Scand.  \textbf{111} (2012) 187--209 }

\bibitem{GG1}{N. Georgiou, B. Guilfoyle, \emph{On the space of oriented geodesics of hyperbolic 3-space}, Rocky Mountain J. Math. {\bf 40} (2010)  1183--1219}


\bibitem{GK1}{B. Guilfoyle, W. Klingenberg, \emph{An indefinite K �ahler metric on the space of oriented lines},
J. London Math. Soc. {\bf 72} (2005), 497--509}

\bibitem{HL1}{R. Harvey and H.B. Lawson, \emph{Calibrated geometries}, Acta Math. {\bf 148}(1982)  47--157}

\bibitem{Oh1}{Y.G. Oh, \emph{Second variation and stabilities of minimal lagrangian submanifolds in K\"ahler manifolds}, Invent. Math. {\bf 101}(1990)  501--519 }

\bibitem{Oh2}{Y.G. Oh, \emph{Volume minimization of Lagrangian submanifolds under Hamiltonian deformations}, Math. Z.  {\bf 212}(1993) 175--192}

\bibitem{salvai0}{M. Salvai, \emph{On the geometry of the space of oriented lines in Euclidean space}, Manuscripta
Math. {\bf 118} (2005), 181--189}

\bibitem{salvai1}{M. Salvai, \emph{On the geometry of the space of oriented lines of hyperbolic space
}, Glasg. Math. J. {\bf 49} (2007), 357--366}


\bibitem{Si}{J. Simons, \em Minimal varieties in Riemannian manifolds \em Ann. Math.   \textbf{88}(1968) 82--105}


\bibitem{SYZ}{A. Strominger, S.-T. Yau, E. Zaslow, \emph{Mirror symmetry is $T$-duality}, Nuclear Phys. {\bf B479}(1996)  243--259}


\bibitem{Ur2}{F. Urbano, \emph{Hamiltonian stability and index of minimal Lagrangian surfaces in complex projective plane}, Indiana Univ. Math. J. {\bf 56}(2007)  931--946}


\end{thebibliography}
\end{document}